\theoremstyle{plain}
\newtheorem{theorem}{Theorem}[section]
\newtheorem{lemma}[theorem]{Lemma}
\newtheorem{corollary}[theorem]{Corollary}
\newtheorem{proposition}[theorem]{Proposition}
\newtheorem{ttheorem}{Theorem}
\theoremstyle{definition}
\newtheorem{definition}[theorem]{Definition}
\newtheorem{example}[theorem]{Example}
\theoremstyle{remark}
\newtheorem*{remark}{Remark}
\newtheorem*{notation}{Notation}
\newtheorem*{question}{Question}
\newcommand{\bbP}{\mathbb P}
\newcommand{\bbR}{\mathbb R}
\newcommand{\bbZ}{\mathbb Z}
\newcommand{\mcE}{\mathcal E}
\newcommand{\mcI}{\mathcal I}
\newcommand{\mcM}{\mathcal M}
\newcommand{\mcO}{\mathcal O}
\newcommand{\mcP}{\mathcal P}
\newcommand{\mcR}{\mathcal R}
\newcommand{\mcX}{\mathcal X}
\newcommand{\mfX}{\mathfrak X}
\newcommand{\mfm}{\mathfrak m}
\newcommand{\mfn}{\mathfrak n}
\renewcommand{\hom}{\mathop\mathsf{hom}\nolimits}
\newcommand{\xra}[1]{\xrightarrow{#1}}
\newcommand{\ra}{\rightarrow}
\newcommand{\xlra}[1]{\xrightarrow{\ #1\ }}
\newcommand{\lra}{\longrightarrow}
\newcommand{\codim}{\mathop\mathrm{codim}\nolimits}
\newcommand{\Diff}{\mathrm{Diff}}
\newcommand{\germ}{\mathrm{germ}}
\newcommand{\Hom}{\mathop\mathrm{Hom}\nolimits}
\newcommand{\id}{\mathrm{id}}
\newcommand{\lspan}{\mathop\mathrm{span}}
\newcommand{\SP}{\mathop\mathrm{SP}\nolimits}
\newcommand{\spec}{\mathop\mathrm{sp}\nolimits}
\newcommand{\SurHom}{\mathop\mathrm{SurHom}\nolimits}
\newcommand{\wspec}{\mathrm{wsp}}
\newlength{\hlp}
\newcommand{\rightbox}[2]{\settowidth{\hlp}{$#2$}\makebox[\hlp][r]{${#1}{#2}$}}
\title{The space of ideals of $C^\infty(M,\bbR)$}
\author{Luk\'a\v{s} Vok\v{r}\'inek}
\address{Department of Mathematics and Statistics\\Masaryk University\\Kotl\'a\v rsk\'a 2\\611 37 Brno\\Czech Republic}
\email{koren@math.muni.cz}
\keywords{ideal, finite codimension, smooth manifold, semialgebraic set}
\subjclass[2000]{Primary 53; Secondary 57}
\begin{document}

\begin{abstract}
The paper is concerned with defining a topology on the set of ideals of codimension $d$ of the algebra $C^\infty(M,\bbR)$ with $M$ being a compact smooth manifold. Its main property is that it is compact Hausdorff and it contains as a subspace the configuration space of $d$ distinct unordered points in $M$ and therefore provides a ``compactification'' of this configuration space. It naturally forms a space over the symmetric product $\SP_d(M)$ and as such has a very rich structure. Moreover it is covered by a (naturally defined) set of charts in which it is semialgebraic.
\end{abstract}

\maketitle

\setcounter{section}{-1}

\section{Introduction}

In \cite{Glaeser}, Glaeser showed that for any parallelepiped $P\subseteq\bbR^m$ the set of ideals in $C^\infty(P,\bbR)$ of a fixed finite codimension can be given a topology using polynomials on $\bbR^m$ and proved that this space is compact Hausdorff. The deficiency of his approach is that this topology is then only seen to be invariant with respect to affine maps. We manage to avoid polynomials and topologize the set of ideals of the algebra of smooth functions on a smooth manifold (which would not be possible with \cite{Glaeser} for the lack of diffeomorphism invariancy).

In this paper let $M$ be a smooth manifold. In the introduction (and the abstract too) we assume for simplicity $M$ to be compact. Let $\mcR=C^\infty(M,\bbR)$ the algebra of smooth functions on $M$. We
will be considering ideals of $\mcR$ of a fixed finite codimension $d$ over $\bbR$.
Our first goal is to endow the set $\mcM_d$ of all such
ideals with a topology and study its properties. It will be compact Hausdorff. Together
with $\mcM_d$ we will also topologize
\[\mcE_d=\{(I,f+I)\ |\ I\in\mcM_d,\ f+I\in\mcR/I\}\]
in such a way that the canonical projection map $\mcE_d\ra\mcM_d$
becomes a vector bundle. The map $\mcM_d\times\mcR\lra\mcE_d$
sending $(I,f)$ to $(I,f+I)$ is a (continuous) homomorphism of
vector bundles. This is certainly a property one would require
from any such topology.

In the introduction to his thesis \cite{Goodwillie}, Goodwillie writes that $\mcM_d$ should be ``semialgebraic relative to $M$''. In our construction of $\mcM_d$ this has a clear meaning and our second goal will be to prove Goodwillie's claim.

For any set $Y\subseteq M$ consisting of $d$
points one has an ideal
\[\mfm_Y:=\{f\in\mcR\ |\ f(y)=0\textrm{ for all }y\in Y\}\]
and easily $\mfm_Y\in\mcM_d$. In this way one has as a subset in
$\mcM_d$ the configuration space $M^{[d]}$ of $d$ distinct
unordered points in $M$,
\[M^{[d]}\hookrightarrow\mcM_d\]
For our topology on $\mcM_d$ this inclusion map is an
embedding and therefore one can think of $\mcM_d$ as a
``compactification'' of $M^{[d]}$.

There is an inverse procedure of associating to each $I\in\mcM_d$
an unordered $d$-tuple of points in $M$. However for ideals not of
the form $\mfm_Y$ these points are not distinct and
therefore this procedure yields a map
\[\wspec:\mcM_d\ra M^d/\Sigma_d=\SP_d\]
We call $\wspec(I)$ the weighted spectrum of $I$.

Let us try to indicate now how we construct the topology on
$\mcM_d$. If $\mcR$ was finite dimensional we could simply think
of $\mcM_d$ as a subset of the Grassmannian manifold $G_d(\mcR)$
of linear subspaces of $\mcR$ of codimension $d$. This is of
course never the case. On the other hand suppose that there
is a finite dimensional linear subspace $F\subseteq\mcR$ that is
transverse to each $I\in\mcM_d$. Then taking the intersection of each $I\in\mcM_d$ with $F$
produces a map $\mcM_d\ra G_d(F)$ from $\mcM_d$ to the Grassmannian manifold of codimension $d$ linear subspaces of $F$. If this map is moreover injective one could give $\mcM_d$ the subspace topology. With the goal of giving $\mcM_d$ a structure of a fibrewise semialgebraic set we rather think of $\mcM_d$ as embedded into $\SP_d\times G_d(F)$. Nevertheless the first key step is still to find a transversal $F$ for which the map $\mcM_d\ra G_d(F)$ is injective.
 
This is done in the first section and the results here are both well-known and quite elementary. It starts by describing the structure of ideals of $\mcR$ so that one is able to understand the transversality condition. At the end of the section we produce a transversal $F$.

In the second section we compare the topologies induced via two such transversals $F\subseteq F'$. The main result states that the (partially defined) map
\[\SP_d\times G_d(F')\lra\SP_d\times G_d(F)\]
sending $(Y,L)$ to $(Y,F\cap L)$ induces a homeomorphism between the images of $\mcM_d$. Therefore the above topology is independent of the choice of the transversal and is our topology on $\mcM_d$. This is proved in Theorem~\ref{int_independence_of_topology}. We then proceed to describe in Theorem~\ref{int_fibrewise_semialgebraicity} the structure on $\mcM_d$ of a semialgebraic set. There are special charts on $\SP_d\times G_d(F)$ coming from pairs of a chart on $M$ and a ``compatible'' transversal $F$. In these charts $\mcM_d$ is semialgebraic. This situation is in the spirit very much like that of ``semialgebraic'' subsets of the jet bundle $J^r(M,\bbR^k)$ coming from  $\Diff(\bbR^m)$-invariant semialgebraic subsets of $J^r(\bbR^m,\bbR^k)$ which are also semialgebraic in all charts coming from $M$. The formal definition is given in Definition~\ref{definition_semialgebraic}.

The third section is dedicated to studying the topology of
$\mcM_d$. Two theorems are proved, the first of which states that
the inclusion of the configuration space into $\mcM_d$ is a smooth
embedding as was already mentioned above. The second theorem (in fact a generalization of the first) is
concerned with the set of all ideals $I$ with a fixed isomorphism
type of the quotient algebra $\mcR/I$. These ideals possess a
structure of a smooth manifold (as was proved in \cite{Alonso})
and the content of our second theorem is that they form an
immersed submanifold in $\mcM_d$. As a set $\mcM_d$ is then a
disjoint union of these submanifolds. In general there is an infinite number of them.

\section{The structure of ideals of $\mcR$}

\begin{notation}
We recall the notation from the introduction. Let $M$ be a smooth manifold (possibly non-compact and with boundary or even with corners) and let $\mcR=C^\infty(M,\bbR)$ denote the algebra of all smooth functions on $M$.
\end{notation}

\begin{remark}
Most of the content of the paper could be generalized to the case of submodules of a fixed finitely generated $\mcR$-module of a fixed finite codimension. This is treated quite in detail in the author's thesis \cite{Thesis}.
\end{remark}

The aim of this section is to prove the following theorem. The main reference is the authors's thesis\footnote{Only the case of a compact manifold $M$ is treated in detail. The extension to non-compact $M$ is provided by the fact that maximal ideals not corresponding to points of $M$ have always infinite codimension (the proof of which can be also found there in section 1.5.).} \cite{Thesis} although all the results are more or less well-known.

\begin{theorem} \label{int_existence_of_transversal}
There is a finite dimensional linear subspace $F\subseteq\mcR$ which is transverse to all ideals $I\subseteq\mcR$ of a fixed finite codimension $d$.
\end{theorem}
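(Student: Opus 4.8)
The plan is to reduce the problem to a local one via a partition of unity, and then to handle the local situation by a jet-space argument. The key structural input is the description of finite-codimension ideals of $\mcR$: if $I\in\mcM_d$, then $\mcR/I$ is a finite-dimensional commutative $\bbR$-algebra, hence (being Artinian) a finite product of local $\bbR$-algebras, and each local factor has residue field $\bbR$ and is therefore a quotient of $\mcR$ supported at a single point of $M$. Consequently $I$ contains $\mfm_Y^k$ for some finite set $Y\subseteq M$ with $|Y|\le d$ and some $k\le d$; in particular $I$ is determined by, and contains, the ideal of functions vanishing to order $k$ on the set $Y=\wspec(I)_{\mathrm{red}}$. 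This means ``transversality of $F$ to $I$'' is a condition that only sees a finite jet of functions at the (at most $d$) points of $Y$.

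First I would make this precise: fix $k=d$ (so that $\mfm_Y^d\subseteq I$ for every $I\in\mcM_d$ with $|Y|\le d$), and observe that $F\subseteq\mcR$ is transverse to $I$ — meaning $F\oplus I=\mcR$, equivalently $F\to\mcR/I$ is an isomorphism, equivalently (since $\dim\mcR/I=d$) the composite $F\to\mcR/I$ is surjective — if and only if the images of the elements of $F$ under the jet map $j^{d-1}_Y\colon\mcR\to J^{d-1}_Y:=\prod_{y\in Y}\mcR/\mfm_y^{d}$ span a complement to $I/\mfm_Y^{d}$ inside $J^{d-1}_Y$. So it suffices to find a finite-dimensional $F$ whose $(d-1)$-jets at every unordered $d$-tuple $Y$ of points of $M$ surject onto $J^{d-1}_Y$.

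Next, the local step. Around each point $p\in M$ choose a chart; on a slightly smaller coordinate ball $B_p$ one can exhibit finitely many smooth functions on $M$ (coordinate monomials of degree $<d$ times a bump function supported near $B_p$, extended by zero) whose $(d-1)$-jets at any single point of $B_p$ span $\mcR/\mfm_q^{d}$. By compactness of $M$, cover $M$ by finitely many such balls $B_{p_1},\dots,B_{p_N}$ and let $F_0$ be the (finite-dimensional) span of all the corresponding functions. For an arbitrary $d$-tuple $Y=\{y_1,\dots\}$, each $y_i$ lies in some $B_{p_{j(i)}}$; the issue is that the bump functions from different balls overlap, so the jet of an element of $F_0$ at $y_i$ is contaminated by contributions from bump functions centered at other $p_j$'s, and more seriously by contributions at the *other* points $y_{i'}$ sharing the same ball. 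The standard fix is to pass to products: enlarge $F_0$ to $F:=F_0\cdot F_0\cdots F_0$ ($d$ factors), i.e. the span of all products of at most $d$ elements of $F_0$; multiplying by suitable functions from $F_0$ that vanish to high order at the unwanted points while being nonzero at $y_i$ lets one separate the jets at the (at most $d$) distinct points of $Y$ and kill the cross-contamination — this is where the ``$d$ points, order $d$'' bookkeeping is used, and one checks that $d$ factors always suffice because $|Y|\le d$. Then $j^{d-1}_Y(F)=J^{d-1}_Y$ for every $Y$, hence $F$ is transverse to every $I\in\mcM_d$.

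The main obstacle is precisely this separation-of-jets step: showing that a *single* finite-dimensional $F$, chosen without reference to $Y$, simultaneously controls all $(d-1)$-jets at all $d$-tuples $Y$ despite the unavoidable overlap of the bump functions in the partition of unity. Handling coincidences (points of $Y$ colliding, several points in one chart, the ``weighted'' multiplicities recorded by $\wspec$) is what forces the passage to products and the degree-$d$ truncation, and getting the multilinear-algebra bookkeeping right — that products of elements of $F_0$ can realize any prescribed jet at $y_i$ while vanishing to order $\ge d$ at the remaining points of $Y$ — is the technical heart of the argument. The non-compact case is then disposed of by the footnoted fact that maximal ideals not supported at points of $M$ have infinite codimension, so $\wspec(I)$ always consists of genuine points of $M$, together with a more careful (locally finite) choice of the covering.
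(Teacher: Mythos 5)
Your approach is viable and reaches the theorem by a genuinely different route from the paper, so let me compare. Both arguments share the first reduction (via Nakayama every $I\in\mcM_d$ contains an ideal of the form $\mfm_Y$, so transversality is a condition on finitely many finite-order jets) and the same local input (polynomials of degree $<d$ in a chart interpolate all jets of total weight $d$). Where you differ is the globalization: you take a finite cover of $M$, localize polynomials by bump functions, and then close up under products to separate the jets at points sitting in overlapping supports. The paper instead chooses an embedding $\iota\colon M\hookrightarrow V$ into a Euclidean space, takes a polynomial transversal $F'$ on $V$, and sets $F=\iota^*F'$: since $\iota^*\colon C^\infty(V,\bbR)\to C^\infty(M,\bbR)$ and the jet restriction $J_{\iota(Y)}(V,\bbR)\to J_Y(M,\bbR)$ are surjective, transversality on $V$ gives transversality on $M$ by a three-line diagram chase, with no partition of unity and no products — i.e.\ it sidesteps exactly the step you call the technical heart. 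Your separation step can indeed be completed: for a given $y_i$ and another point $y_{i'}$ of weight $k_{i'}$, a single element of your $F_0$ (either $\ell^{k_{i'}}\chi$ with $\ell$ affine vanishing at $y_{i'}$ but not at $y_i$, or a bump vanishing identically near $y_{i'}$ and nonvanishing at $y_i$) does the job, and multiplying at most $d$ such factors realizes any jet of order $k_i-1$ at $y_i$ while lying in $\mfm_{y_{i'}}^{k_{i'}}$ for $i'\neq i$. Two remarks, though. First, the exact count of factors is irrelevant: any fixed finite number of factors keeps $F$ finite dimensional, so the delicate ``$d$ factors suffice'' bookkeeping — which in fact does not come out right under your uniform order-$d$ normalization, only under the weighted one — can simply be dropped; this makes your route easier than you suggest, but still heavier than the paper's. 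Second, transversality here means $F+I=\mcR$, not $F\oplus I=\mcR$; your stated equivalence with ``$F\to\mcR/I$ is an isomorphism'' only holds when $\dim F=d$, and the transversal produced (by either construction) has larger dimension. Your operative criterion — surjectivity of $F\to\mcR/I$, checked on jets — is the correct one, so this is a slip of wording rather than of substance.
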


The idea of the proof is pretty simple. Every ideal of a finite codimension contains an ideal of the form $(\mfm_{y_1})^{k_1}\cdots(\mfm_{y_n})^{k_n}$ where $\mfm_y$ are the maximal ideals. For ideals of this form the transversality ammounts to a certain interpolation property which is easily satisfied for example by polynomials. We proceed with a more detailed proof.

\begin{definition}
For an ideal $I\subseteq\mcR$ the \textit{spectrum} of $I$ is
defined to be the closed subset
\[\spec(I):=\bigcap_{f\in I}f^{-1}(0)=\{x\in M\ |\ \forall f\in I:f(x)=0\}\]
\end{definition}

\begin{example}
For a point $x\in M$ we define the following two ideals 
\[\mfm_x:=\{f:M\rightarrow\bbR\ |\ f(x)=0\} \qquad \mfn_x:=\{f:M\rightarrow\bbR\ |\ \germ_xf=0\}\]
Clearly $\spec(\mfm_x)=\spec(\mfn_x)=\{x\}$.
\end{example}

The main structural result for ideals of finite codimension is the following.

\begin{theorem}
Every ideal $I\subseteq\mcR$ of finite codimension has a unique (up to permutation of the factors) decomposition as an intersection
\[I=I_1\cap\cdots\cap I_n\]
of ideals $I_i\subseteq\mcR$ with the following two properties:
\begin{itemize}
\item the spectrum of each $I_i$ consists of a single point $y_i$
\item the points $y_i$ are all different
\end{itemize}
In this decomposition $I_i=I+\mfn_{y_i}$, ideals $I_i$ are in general position, i.e.
\[I_i+(I_1\cap\cdots\cap\widehat{I_i}\cap\cdots\cap I_n)=\mcR\]
and $y_i$ are exactly the points of the spectrum $\spec(I)$. This decomposition is called the \emph{primary decomposition} of $I$.
\end{theorem}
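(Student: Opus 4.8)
The plan is to establish the decomposition in three stages: first, reduce to the local structure at each point of the (necessarily finite) spectrum; second, construct the ideals $I_i$ and verify the two listed properties together with the general position condition; third, prove uniqueness. The starting observation is that $\spec(I)$ is finite. Indeed, since $I$ has finite codimension $d$, the quotient $\mcR/I$ is a finite-dimensional commutative $\bbR$-algebra, and the evaluation-at-$x$ homomorphisms for $x\in\spec(I)$ descend to distinct $\bbR$-algebra maps $\mcR/I\to\bbR$, which are linearly independent, so $\#\spec(I)\le d$. Write $\spec(I)=\{y_1,\dots,y_n\}$.

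For the construction, I would set $I_i:=I+\mfn_{y_i}$ and first check $\spec(I_i)=\{y_i\}$: clearly $\spec(I_i)\subseteq\spec(I)\cap\spec(\mfn_{y_i})\subseteq\{y_i\}$, and $y_i\in\spec(I)$ forces $y_i\in\spec(I+\mfn_{y_i})$ since every element of $I$ vanishes at $y_i$ and every element of $\mfn_{y_i}$ has zero germ there, hence vanishes at $y_i$. The heart of the matter is the equality $I=I_1\cap\cdots\cap I_n$ and the general position statement; these I would handle simultaneously via a partition-of-unity / bump-function argument. For each $i$ pick a smooth function $\varphi_i$ equal to $1$ near $y_i$ and supported in a neighborhood disjoint from the other $y_j$, with $\sum$-type normalization so that $\varphi_1+\cdots+\varphi_n=1$ on a neighborhood of $\spec(I)$. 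The key local fact, which I expect to be the main obstacle, is: \emph{if $f\in\mcR$ vanishes on a neighborhood of $\spec(I)$, then $f\in I$}. This is a smooth-functions phenomenon with no analogue for polynomial or analytic algebras, and it is exactly where finite codimension of $I$ must be used — one argues that $f$ lies in $\mfn_{y_j}$-adic "high powers" locally and then globalizes, invoking that $I$ contains $\mfm_{y_1}^{k_1}\cdots\mfm_{y_n}^{k_n}$ for suitable $k_j$ (the remark preceding the statement, that every finite-codimension ideal contains such a product, is the technical input here). Granting this, for $g\in I_1\cap\cdots\cap I_n$ write $g=\sum_i\varphi_i g$ modulo a function vanishing near $\spec(I)$; each $\varphi_i g$ agrees near $y_i$ with an element of $I$ (because $g\in I_i=I+\mfn_{y_i}$ means $g$ agrees with an element of $I$ up to an element of $\mfn_{y_i}$, and $\varphi_i$ localizes this), and agrees with $0$ (hence an element of $I$) away from $y_i$; patching with the bump functions shows $\varphi_i g\in I$ up to a function vanishing near $\spec(I)$, so $g\in I$. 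The reverse inclusion $I\subseteq I_i$ is immediate. The same bump functions give general position: $\varphi_i\in I_j$ for all $j\ne i$ (its germ at $y_j$ is zero) while $1-\varphi_i$ lies in $I_i+$(something vanishing near $y_i$), exhibiting $1=\varphi_i+(1-\varphi_i)$ as a sum witnessing $I_i+\bigcap_{j\ne i}I_j=\mcR$.

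For uniqueness, suppose $I=J_1\cap\cdots\cap J_m$ is another such decomposition with $\spec(J_k)=\{z_k\}$, the $z_k$ distinct. Applying $\spec$ and using that $\spec$ turns finite intersections of finite-codimension ideals into unions (again via the bump-function separation) gives $\{z_1,\dots,z_m\}=\spec(I)=\{y_1,\dots,y_n\}$, so $m=n$ and after reindexing $z_i=y_i$. It remains to see $J_i=I+\mfn_{y_i}$. One inclusion $I+\mfn_{y_i}\subseteq J_i$ holds since $I\subseteq J_i$ and $\mfn_{y_i}\subseteq J_i$ (as $\spec(J_i)=\{y_i\}$ and one checks, using finite codimension of $J_i$, that $J_i\supseteq\mfn_{y_i}$ — indeed $\mcR/J_i$ is a finite-dimensional local algebra supported at $y_i$, so depends only on the germ). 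Conversely, given $h\in J_i$, use $\varphi_i h\in I$ (by the patching argument above applied to the $J$-decomposition) and $(1-\varphi_i)h\in\mfn_{y_i}$ to write $h=\varphi_i h+(1-\varphi_i)h\in I+\mfn_{y_i}$. This closes the proof; the only genuinely delicate point throughout is the "vanishing near the spectrum implies membership" lemma, and I would isolate it as a preliminary lemma before proving the theorem.
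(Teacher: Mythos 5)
Your overall architecture is fine: finiteness of the spectrum via linear independence of characters, the bump-function patching giving $I=(I+\mfn_{y_1})\cap\cdots\cap(I+\mfn_{y_n})$, general position via $1=\varphi_i+(1-\varphi_i)$, and the uniqueness argument all work \emph{once} your key lemma (``$f$ vanishing on a neighbourhood of $\spec(I)$ implies $f\in I$'') is available; you correctly observe that everything reduces to it. The genuine gap is in your proposed proof of that lemma. The containment you invoke, that $I$ contains $(\mfm_{y_1})^{k_1}\cdots(\mfm_{y_n})^{k_n}$, is in this paper precisely Lemma~\ref{lemma_polynomial_submodule_containment}: it is stated in terms of $\wspec(I)$, a notion that is only defined by means of the primary decomposition, and its proof (via Nakayama, in \cite{Thesis}) comes downstream of the theorem you are proving. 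The sentence preceding Theorem~\ref{int_existence_of_transversal} that you cite is an announcement of this fact, not an available input. So as written your argument is circular, or at best defers the entire difficulty of the theorem to an unproven assumption.

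To break the circle you would need two ingredients that never appear in your proposal. First, that every maximal ideal of $\mcR$ of finite codimension is of the form $\mfm_x$ --- equivalently, that a proper ideal of finite codimension has nonempty spectrum. This is nontrivial when $M$ is noncompact (which the paper allows), is exactly what the paper relegates to a footnote citing \cite{Thesis}, and is silently needed by you: your proof never shows $n\geq 1$, and your key lemma applied to a hypothetical proper ideal with empty spectrum would assert $I=\mcR$. Second, an argument inside the Artinian algebra $\mcR/I$: granted the first ingredient, its maximal ideals are exactly the images of $\mfm_{y_1},\ldots,\mfm_{y_n}$, so its Jacobson radical is their intersection and is nilpotent, giving $(\mfm_{y_1})^{N}\cdots(\mfm_{y_n})^{N}\subseteq I$ for some $N$; your ``globalization'' step is then easy (write $f=\chi_1^N\cdots\chi_n^N f$ with $\chi_j$ vanishing near $y_j$ and equal to $1$ on $\supp f$). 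With these two points supplied as preliminary lemmas, your existence, general-position and uniqueness arguments go through essentially as you wrote them (minor remarks: the identity $\spec(J_1\cap\cdots\cap J_m)=\spec(J_1)\cup\cdots\cup\spec(J_m)$ needs no bump functions, just a product of witnesses; and $1-\varphi_i\in\mfn_{y_i}\subseteq I_i$ directly, no ``up to something vanishing near $y_i$'' is needed).
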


\begin{proof}
The statement is rather classical and a complete proof can be found in \cite{Thesis}. Namely in Lemma~1.8.~the uniqueness of the decomposition is proved together with its form $I_i=I+\mfn_i$. In the next two paragraphs following this lemma the finiteness of the spectrum is shown together with its implication towards the existence of the decomposition. General position of $I_i$'s is proved in Corollary~1.7.
\end{proof}

Denoting the spectrum of $I$ by $\{y_1,\ldots,y_n\}$ as in the statement the general position of the decomposition yields the following formula for codimensions.
\[\dim\mcR/I=\dim\mcR/(I+\mfn_{y_1})+\cdots+\dim\mcR/(I+\mfn_{y_n})\]
We set $k_i=\dim\mcR/(I+\mfn_{y_i})$ and observe that the
spectrum of $I$ has more structure if $I$ has finite codimension:
each point $y_i$ in the spectrum has associated a weight $k_i$
with it.

We fix an integer $d$ and denote the collection of all ideals of codimension~$d$ by $\mcM_d$. We define a space $\SP_d$, the symmetric product, where these more structured spectra will be defined simply by
\[\SP_d:=M^d/\Sigma_d\]
and give it the quotient topology. The space $\SP_d$ consists of unordered
collections of $d$ not necessarily distinct points in $M$. If
$y_1,\ldots,y_n$ are all the points in such a collection $Y$ and if
each $y_i$ appears in it exactly $k_i$-times then we say that $k_i$
is the weight of $y_i$ and use an alternative notation
\[Y=\{y_1^{k_1},\ldots,y_n^{k_n}\}\]
We call $|Y|:=\{y_1,\ldots,y_n\}$ the \textit{support} of $Y$. There
is a weight function $|Y|\rightarrow\bbZ^+$ associating to each
point $y_i$ its weight $k_i$ and by the definition the total weight
$\sum k_i$ is $d$. Therefore we also call $Y$ a set of points with
weights.

Hence with every ideal $I\subseteq\mcR$ of finite codimension
$d$ there is associated a canonical set of points with weights
$Y\in\SP_d$ whose support is the spectrum of $I$ (and whose total
weight is $d$). It is called the \textit{weighted spectrum} of $I$,
\[\wspec:\mcM_d\ra\SP_d\]
On the other hand if $Y=\{y_1^{k_1},\ldots,y_n^{k_n}\}\in\SP_d$
we define an ideal $\mfm_Y\subseteq\mcR$ by the formula
 \[\mfm_Y=(\mfm_{y_1})^{k_1}\cap\cdots\cap(\mfm_{y_n})^{k_n}=\{f\in\mcR\ |\ j^{k_1-1}_{y_1}f=\cdots=j^{k_n-1}_{y_n}f=0\}\]
The following simple consequence of Nakayama's lemma is proved in \cite{Thesis} as Lemma~1.9.

\begin{lemma} \label{lemma_polynomial_submodule_containment}
Let $I\subseteq\mcR$ be an ideal of finite codimension with $\wspec(I)=Y$. Then $\mfm_Y\subseteq I$.\qed
\end{lemma}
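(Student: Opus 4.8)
The plan is to reduce the statement, via the primary decomposition, to the case of an ideal whose spectrum is a single point, and then to apply Nakayama's lemma in the form: \emph{the maximal ideal of a finite-dimensional local $\bbR$-algebra is nilpotent, with nilpotency exponent at most the $\bbR$-dimension of the algebra.} Concretely, write the primary decomposition $I=I_1\cap\cdots\cap I_n$ with $I_i=I+\mfn_{y_i}$, $\spec(I_i)=\{y_i\}$, the $y_i$ pairwise distinct, and set $k_i=\dim_\bbR\mcR/I_i$, so that by definition $\wspec(I)=Y=\{y_1^{k_1},\ldots,y_n^{k_n}\}$ and $\mfm_Y=(\mfm_{y_1})^{k_1}\cap\cdots\cap(\mfm_{y_n})^{k_n}$. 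Since $\mfm_Y\subseteq(\mfm_{y_i})^{k_i}$ for every $i$, it is enough to show $(\mfm_{y_i})^{k_i}\subseteq I_i$ for each $i$; intersecting over $i$ then yields $\mfm_Y\subseteq\bigcap_i I_i=I$. We are thus reduced to the local claim: \emph{if $J\subseteq\mcR$ is an ideal of finite codimension $k$ with $\spec(J)=\{y\}$ and $\mfn_y\subseteq J$, then $(\mfm_y)^k\subseteq J$.}

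To prove this local claim, put $A=\mcR/J$, a commutative $\bbR$-algebra with $\dim_\bbR A=k$, and let $\mfm=\mfm_y/J$, which is a proper ideal of $A$ because $\spec(J)=\{y\}$ forces $J\subseteq\mfm_y$. The crucial observation is that $A$ is local with maximal ideal $\mfm$. Indeed, $\mfn_y\subseteq J$ makes $A$ a quotient of $\mcR/\mfn_y$, and $\mcR/\mfn_y$ is the local ring of germs of smooth functions at $y$: a class $f+\mfn_y$ with $f(y)\neq0$ is a unit, since $f$ is nonvanishing near $y$ and multiplying $1/f$ by a suitable bump function gives a global $g$ with $fg-1\in\mfn_y$. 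As a quotient of a local ring, $A$ is local, with maximal ideal the image of $\mfm_y$, namely $\mfm$. (Alternatively: every maximal ideal of $A$ pulls back to a finite-codimension maximal ideal of $\mcR$, which by the structure theory is a point ideal $\mfm_x$, and $\mfm_x\supseteq J$ forces $x=y$.)

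Now consider the descending chain $A\supseteq\mfm\supseteq\mfm^2\supseteq\cdots$ of $\bbR$-subspaces of $A$. The inclusion $A\supseteq\mfm$ is strict since $\mfm$ is proper; and if $\mfm^j=\mfm^{j+1}$ for some $j\geq1$, then Nakayama's lemma over the local ring $A$, applied to the finitely generated $A$-module $\mfm^j$, gives $\mfm^j=0$, hence $\mfm^k\subseteq\mfm^j=0$. Consequently, if $\mfm^k\neq0$, the chain $A\supsetneq\mfm\supsetneq\cdots\supsetneq\mfm^k\supsetneq0$ would be a strictly descending chain of $\bbR$-subspaces with $k+1$ strict inclusions, forcing $\dim_\bbR A\geq k+1$ and contradicting $\dim_\bbR A=k$. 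Therefore $\mfm^k=0$, i.e.\ $(\mfm_y)^k\subseteq J$, which finishes the reduction and hence the proof.

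The only genuinely delicate point in this argument is the localness of $A=\mcR/I_i$, equivalently the fact that a finite-codimension maximal ideal of $\mcR$ lying over $y_i$ must be $\mfm_{y_i}$ itself (which is where the smooth structure enters, via germs or via the classification of finite-codimension maximal ideals referred to in the footnote); once this is in place the dimension count behind the Nakayama estimate is purely formal. Everything else — the reduction via primary decomposition and the containment $\mfm_Y\subseteq(\mfm_{y_i})^{k_i}$ — is immediate from the structure theorem already established.
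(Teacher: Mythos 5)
Your argument is correct, and it is essentially the proof the paper has in mind: the paper does not spell it out but refers to the thesis and explicitly describes the lemma as a ``simple consequence of Nakayama's lemma,'' which is exactly your reduction via the primary decomposition to the one-point case followed by the Nakayama/dimension-count showing the maximal ideal of the $k$-dimensional local quotient is nilpotent of exponent at most $k$. The one genuinely nontrivial input, that $\mcR/I_i$ is local (equivalently that finite-codimension maximal ideals are point ideals $\mfm_x$), is handled correctly and matches the fact the paper invokes in its footnote.
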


Let $F$ be a linear subspace of $\mcR$. We write $F\pitchfork\mcM_d$ if $F$ is transverse to every element of $\mcM_d$, i.e. to every ideal of $\mcR$ of codimension $d$, and $F\pitchfork\mcP_d$ if $F$ is transverse to all ``polynomial'' ideals $\mfm_Y$, whenever $Y\in\SP_d$.

We will now construct a finite dimensional linear subspace $F\pitchfork\mcP_d$ (and hence also $F\pitchfork\mcM_d$ by Lemma~\ref{lemma_polynomial_submodule_containment}). First we find this subspace locally.

\begin{lemma} \label{int_local_existence_of_transversal_for_R}
For $M=\bbR^m$ there is a finite dimensional linear subspace
$F\subseteq\mcR$ satisfying $F\pitchfork\mcP_d$.
\end{lemma}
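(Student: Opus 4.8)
The plan is to take for $F$ the space of polynomial functions on $\bbR^m$ of degree at most $d-1$. This is finite dimensional (of dimension $\binom{m+d-1}{m}$) and lies in $\mcR$ since polynomials are smooth, so the only thing to check is $F\pitchfork\mcP_d$, i.e.\ $F+\mfm_Y=\mcR$ for every $Y=\{y_1^{k_1},\dots,y_n^{k_n}\}\in\SP_d$, where $\sum_i k_i=d$. Since $\mfm_Y=\{f\in\mcR\mid j^{k_1-1}_{y_1}f=\cdots=j^{k_n-1}_{y_n}f=0\}$, the quotient $\mcR/\mfm_Y$ injects into the product of jet spaces $J^{k_i-1}_{y_i}\cong\mcR/\mfm_{y_i}^{k_i}$, so it suffices to prove that the jet map
\[
F\lra\prod_{i=1}^n J^{k_i-1}_{y_i},\qquad g\lmt\big(j^{k_1-1}_{y_1}g,\dots,j^{k_n-1}_{y_n}g\big)
\]
is onto. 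In other words, I must solve a Hermite-type interpolation problem: given finitely many distinct points $y_1,\dots,y_n$, weights $k_i$ with $\sum_i k_i=d$, and arbitrary jets $p_i\in J^{k_i-1}_{y_i}$, produce a single polynomial of degree $\le d-1$ realizing all of them simultaneously.

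For the interpolant I would build, for each index $i$, a \emph{Lagrange factor} $\varphi_i$ of degree $d-k_i$ that vanishes to order $k_j$ at $y_j$ for every $j\ne i$ and is a unit modulo $\mfm_{y_i}^{k_i}$. Concretely, for each $j\ne i$ choose an affine form $h_j$ with $h_j(y_j)=0$ and $h_j(y_l)\ne0$ for all $l\ne j$ — such an $h_j$ exists because $y_1,\dots,y_n$ are finitely many distinct points and $\bbR$ is infinite, so a generic affine form vanishing at $y_j$ misses all the others — and set $\varphi_i:=\prod_{j\ne i}h_j^{k_j}$. Then $\varphi_i\in\mfm_{y_j}^{k_j}$ for every $j\ne i$, while $\varphi_i(y_i)=\prod_{j\ne i}h_j(y_i)^{k_j}\ne0$, so the class of $\varphi_i$ in the local Artinian ring $\mcR/\mfm_{y_i}^{k_i}$ is invertible; let $u_i$ be the unique polynomial of degree $\le k_i-1$ with $\varphi_iu_i\equiv1\pmod{\mfm_{y_i}^{k_i}}$. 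Given the target jets, let $P_i$ be the Taylor polynomial of degree $\le k_i-1$ at $y_i$ representing $p_i$, let $Q_i$ be the degree $\le k_i-1$ representative of $P_iu_i$ modulo $\mfm_{y_i}^{k_i}$, and put
\[
g:=\sum_{i=1}^n Q_i\,\varphi_i.
\]
Then $\deg(Q_i\varphi_i)\le(k_i-1)+(d-k_i)=d-1$, so $g\in F$; and since $Q_j\varphi_j\in\mfm_{y_i}^{k_i}$ whenever $j\ne i$ (as $h_i^{k_i}$ divides $\varphi_j$), while $Q_i\varphi_i\equiv P_i(\varphi_iu_i)\equiv P_i\pmod{\mfm_{y_i}^{k_i}}$, one gets $j^{k_i-1}_{y_i}g=p_i$ for all $i$, as required. (The case $n=1$ is the degenerate one with $\varphi_1=1$, $g=P_1$.)

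Assembling this shows $F+\mfm_Y=\mcR$ for every $Y\in\SP_d$, hence $F\pitchfork\mcP_d$. The point that needs care — and the main obstacle I anticipate — is the uniformity of the single degree bound $d-1$ over \emph{all} configurations, including badly clustered or collinear tuples: a priori one might fear that interpolating jets at several collinear points forces the degree up, but it does not, because $\varphi_i=\prod_{j\ne i}h_j^{k_j}$ has degree exactly $d-k_i$ irrespective of the geometry (each $h_j$ only has to separate $y_j$ from the finitely many other points, never to pairwise separate all points by one form), and rectifying $\varphi_i$ to act as the identity on the jet at $y_i$ via $u_i$ costs only $k_i-1$ more, so the total stays at $d-1$. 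Two standard facts I would invoke rather than reprove are that the $\mfm_{y_i}^{k_i}$ are pairwise comaximal because the $y_i$ are distinct, and that $\mcR/\mfm_y^{k}$ is canonically the space of $(k-1)$-jets at $y$ (smooth Taylor expansion / Hadamard's lemma), the latter already implicit in the description of $\mfm_Y$ recalled above.
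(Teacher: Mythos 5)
Your proof is correct and is exactly the ``classical interpolation theory for polynomials'' that the paper invokes by citing \cite{Thesis}: you take the same canonical transversal $F$ (polynomials of degree less than $d$, as the paper itself singles out in Definition~\ref{definition_semialgebraic}) and solve the Hermite interpolation problem by the standard Lagrange-factor construction. So the approach matches the paper's; you have merely written out the deferred argument, and the degree count $\deg(Q_i\varphi_i)\le(k_i-1)+(d-k_i)=d-1$ together with the unit/comaximality observations is all that is needed.
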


\begin{proof}
This is the classical interpolation theory for polynomials. A proof can be found for example in \cite{Thesis} as Lemma~1.13.
\end{proof}

\begin{proof}[Proof of Theorem~\ref{int_existence_of_transversal}]
Let us choose an embedding $\iota:M\hookrightarrow V$ of $M$ into a Euclidean space $V$ and note that the canonical map $\iota^*:C^\infty(V,\bbR)\lra C^\infty(M,\bbR)$ is surjective, i.e. any smooth map $M\ra\bbR$ can be extended to $V$. By Lemma~\ref{int_local_existence_of_transversal_for_R} there exists a finite dimensional linear subspace $F'\subseteq C^\infty(V,\bbR)$ such that $F'\pitchfork\mcP_d$ on $V$. Then we claim that $F=\iota^*F'=\{f\iota\ |\ f\in F'\}$ satisfies $F\pitchfork\mcP_d$ on $M$. This is verified by the following diagram where $J_Y(M,\bbR)=J_{y_1}^{k_1-1}(M,\bbR)\times\cdots\times J_{y_n}^{k_n-1}(M,\bbR)$.
\[\xymatrixnocompile{
F' \ar@{c->}[r] \ar[d]^{\iota^*} & C^\infty(V,\bbR) \ar@{->>}[r] \ar@{->>}[d]^{\iota^*} & J_{\iota(Y)}(V,\bbR) \ar@{->>}[d]^{\iota^*} \\
\rightbox{F={}}{\iota^*F'} \ar@{c->}[r] & C^\infty(M,\bbR) \ar@{->>}[r] & J_Y(M,\bbR)
}\]
The composition across the top row is surjective and therefore so must be the composition across the bottom one.
\end{proof}

\section{The topology and geometry of $\mcM_d$}

In this section we define a topology on $\mcM_d$ using a transversal $F$ of Theorem~\ref{int_existence_of_transversal} and prove that in fact it is independent of the choice of this transversal. There is the following order-preserving map
\[U_F:\{\textrm{ideals of $\mcR$}\}\lra\{\textrm{linear subspaces of $F$}\}\]
sending $I\subseteq\mcR$ to its intersection $F\cap I$ with $F$.

\begin{proposition} \label{int_injectivity_proposition}
If $F\pitchfork\mcM_{d+1}$ then for ideals $I,I'\subseteq\mcR$ of codimension at most $d$ the following holds
\[I\subseteq I'\quad\Longleftrightarrow\quad F\cap I\subseteq F\cap I'\]
In particular $U_F|_{\mcM_d}$ is injective. Moreover $F$ with these properties exists.
\end{proposition}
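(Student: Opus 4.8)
The plan is the following. The implication $I\subseteq I'\Rightarrow F\cap I\subseteq F\cap I'$ is trivial, and the existence claim is just Theorem~\ref{int_existence_of_transversal} applied to the integer $d+1$; once the displayed biconditional is established, injectivity of $U_F|_{\mcM_d}$ follows by feeding it a pair $I,I'\in\mcM_d$ with $F\cap I=F\cap I'$ in both directions. So all the content lies in the reverse implication, and it will be derived from a single elementary consequence of the hypothesis.

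That consequence is: $F\pitchfork\mcM_{d+1}$ implies that $F$ is transverse to \emph{every} ideal of codimension at most $d+1$. To see it I would observe that any ideal $J$ with $\codim J\le d+1$ contains an ideal $J'$ of codimension exactly $d+1$: first shrink $J$ to an ideal $J''$ of codimension $\ge d+1$ (intersect with a high power of a maximal ideal $\mfm_y$, $y\in\spec J$, or with maximal ideals at new points), then take the appropriate term of a composition series of the finite length module $J/J''$, whose successive codimensions run through every integer from $\codim J$ up to $\codim J''$; since $F+J'=\mcR$ it follows that $F+J=\mcR$. This is the only point at which the hypothesis is used.

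For the reverse implication I would argue by contradiction: suppose $F\cap I\subseteq F\cap I'$ but $I\not\subseteq I'$. Then $I\cap I'\subsetneq I$, and $\mcR/(I\cap I')$ is finite dimensional because it embeds into $\mcR/I\oplus\mcR/I'$; hence among the ideals $K$ with $I\cap I'\subseteq K\subsetneq I$ there is one, $K$, of maximal dimension over $I\cap I'$, and for such a $K$ the quotient $I/K$ is a minimal nonzero ideal of the finite dimensional commutative algebra $\mcR/K$. Here I would invoke the structural fact that every finite codimension maximal ideal of $\mcR$ is of the form $\mfm_y$ with $\mcR/\mfm_y=\bbR$, so that $\mcR/K$ is a finite product of local Artinian $\bbR$-algebras with residue field $\bbR$; in such an algebra a minimal ideal is annihilated by the relevant maximal ideal (by Nakayama together with minimality) and is therefore one-dimensional over $\bbR$. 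Consequently $\codim K=\codim I+1\le d+1$.

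Now $F$ is transverse to both $I$ and $K$, so $\dim F/(F\cap K)=\codim K=\codim I+1$ while $\dim F/(F\cap I)=\codim I$; since $K\subseteq I$, this forces the strict inclusion $F\cap K\subsetneq F\cap I$. Choosing $h\in(F\cap I)\setminus(F\cap K)$ gives $h\in I\setminus K$, and $I\cap I'\subseteq K$ then forces $h\notin I'$, so $h\in(F\cap I)\setminus(F\cap I')$, contradicting the hypothesis; hence $I\subseteq I'$. I expect the maximality step to be the crux: it is what guarantees that exactly one codimension is gained, which is precisely why transversality to $\mcM_{d+1}$ (and not to much higher codimensions) is enough. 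The one delicate spot inside it is the use of real residue fields to keep minimal ideals one-dimensional --- over an algebra with a complex residue field one would only obtain $\codim K\le\codim I+2$.
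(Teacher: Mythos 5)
Your argument is correct and follows essentially the same route as the paper: choose a maximal proper subideal $K$ of $I$ containing $I\cap I'$, observe that $\codim K=\codim I+1$ because the simple quotient $I/K\cong\mcR/\mfm_y$ is one-dimensional, and use transversality to $K$ to force the strict inclusion $F\cap K\subsetneq F\cap I$, which contradicts $F\cap I\subseteq F\cap I'$ since $I\cap I'\subseteq K$. Your explicit auxiliary lemma that $F\pitchfork\mcM_{d+1}$ yields transversality to every ideal of codimension at most $d+1$ is a detail the paper glosses over (its assertion that the intermediate ideal lies in $\mcM_{d+1}$ is literally correct only when $\codim I=d$), so that addition tightens the same argument rather than changing it.
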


\begin{proof}
Assume for contradiction that $F\cap I\subseteq F\cap I'$ but also $I\cap I'\subsetneqq I$. Let $I''$ be a maximal ideal of $I$ containing $I\cap I'$ which exists by the finite codimension of $I\cap I'$. By maximality $I/I''\cong\mcR/\mfm_y$ for some maximal ideal $\mfm_y$ and in particular $I''\in\mcM_{d+1}$, hence $F\pitchfork I''$. By transversality then
\[(F\cap I)\cap(F\cap I')=F\cap(I\cap I')\subseteq F\cap I''\subsetneqq F\cap I,\]
a contradiction.
\end{proof}

This proposition allows us to use $U_F$ for putting a topology on $\mcM_d$. As we explained in the introduction it is more convenient to induce the topology on $\mcM_d$ from the inclusion
\[\bar U_F=(\wspec,U_F):\mcM_d\lra\SP_d\times G_d(F)\]
This has the advantage of splitting the two ``directions'' in $\mcM_d$, the geometry and the algebra. These correspond respectively to coordinates in $\SP_d$ and $G_d(F)$. Moreover in this way $\mcM_d$ is a space over $\SP_d$ straight from the definition.

To explain the statement of the main theorem recall that there is a canonical vector bundle $\mcE_d$ over $\mcM_d$ whose fibre over $I\in\mcM_d$ is the quotient algebra $\mcR/I$. Obviously it is (as a set) the pullback along $U_F$ of the canonical $d$-dimensional vector bundle $E_d(F)$ over $G_d(F)$ with fibre $F/L$ over $L\subseteq F$. This allows us to induce a topology on $\mcE_d$ from $\mcM_d$ and $E_d(F)$. It is also useful to think of $E_d(F)$ as a subset of $G_d(F)\times F$ and thus of $\mcE_d$ as a subset of $\SP_d\times G_d(F)\times F$ by identifying $F/L\cong L^\perp$ for some scalar product on $F$.

\begin{ttheorem} \label{int_independence_of_topology}
The topology on $\mcM_d$ does not depend on the choice of $F$ as long as $\bar U_F$ is injective. The weighted spectrum map $\wspec:\mcM_d\ra\SP_d$ is continuous and proper ($\mcM_d$ is a closed subset of $\SP_d\times G_d(F)$). The topology on $\mcE_d$ is also independent of $F$ and the canonical map $\mcM_d\times\mcR\ra\mcE_d$ given on the fibre over $I$ by the projection $\mcR\ra\mcR/I$ is a continuous quotient map of vector bundles over $\mcM_d$.
\end{ttheorem}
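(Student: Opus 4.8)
The plan is to establish Theorem~\ref{int_independence_of_topology} in four stages, reducing everything to a comparison between two transversals.

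\textbf{Step 1: Reduce to a cofinal comparison.} Given two transversals $F, F'$ with $\bar U_F, \bar U_{F'}$ injective, I first observe that it suffices to compare each with their sum $F + F'$ (or, more conveniently, with any common finite-dimensional transversal $F'' \supseteq F, F'$; such $F''$ exists since the union of two transversals to all of $\mcM_d$ is again transverse, and we may also arrange $F'' \pitchfork \mcM_{d+1}$ by Proposition~\ref{int_injectivity_proposition}). So the whole theorem follows once I prove: for $F \subseteq F'$ both satisfying the injectivity hypothesis (and with $F \pitchfork \mcM_{d+1}$), the partially-defined ``restriction'' map
\[
\rho : \SP_d \times G_d(F') \dashrightarrow \SP_d \times G_d(F), \qquad (Y, L) \longmapsto (Y, F \cap L)
\]
restricts to a homeomorphism $\bar U_{F'}(\mcM_d) \to \bar U_F(\mcM_d)$ intertwining $\bar U_{F'}$ and $\bar U_F$. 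The intertwining identity $\rho \circ \bar U_{F'} = \bar U_F$ is immediate from the definitions since $F \cap (F' \cap I) = F \cap I$, and it is a bijection onto its image by the injectivity clauses. So the content is purely topological: continuity of $\rho$ on $\bar U_{F'}(\mcM_d)$, and continuity of its inverse.

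\textbf{Step 2: Continuity of $\rho$ via the Grassmannian incidence variety.} The map $L \mapsto F \cap L$ is continuous on the open set of $L \in G_d(F')$ that are transverse to $F$ inside $F'$ (i.e. $F + L = F'$), and this can be seen by a standard argument: near such an $L_0$, write $L$ as the graph of a linear map from a fixed complement, and then $F \cap L$ is cut out by the obvious linear equations whose coefficients vary continuously; alternatively, invoke that the incidence locus $\{(L, K) : K \subseteq F \cap L\}$ is closed and the fiber dimension is locally constant there. The key point I must verify is that every $L$ in the image $U_{F'}(\mcM_d)$ lies in this transversality locus --- but that is exactly the statement that $F$ is transverse to the corresponding ideal $I$, i.e. $F + I = \mcR$, which holds by $F \pitchfork \mcM_d$. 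Hence $\rho$ is defined and continuous on an open neighborhood of $\bar U_{F'}(\mcM_d)$, in particular on $\bar U_{F'}(\mcM_d)$ itself.

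\textbf{Step 3: Properness / closedness, and then the inverse is continuous.} This is the main obstacle. The cleanest route is to prove that $\mcM_d$, viewed via $\bar U_F$, is a \emph{closed} subset of $\SP_d \times G_d(F)$, and that $\wspec$ is proper. For properness of $\wspec$: over a compact $K \subseteq \SP_d$ (a finite set of points with weights, varying in a compact family), the relevant ideals all contain a fixed $\mfm_Y$-type ideal and hence $\mcR/I$ is a quotient of a fixed finite-dimensional algebra; the $G_d(F)$-coordinate then lives in a closed subset of the (compact) Grassmannian, and one checks this subset is closed by a limiting argument --- a convergent sequence of subspaces $F \cap I_j$ with $\wspec(I_j) \to Y$ has a limit which is again of the form $F \cap I$ for an honest codimension-$d$ ideal $I$ with $\wspec(I) = Y$, because the defining conditions ``$L = F \cap I$ for some ideal'' translate, after intersecting with the fixed transversal and using the jet description $\mfm_Y \subseteq I$, into closed algebraic conditions on $(Y, L)$. (This is also where the semialgebraic description of $\mcM_d$ from Section~2 is secretly doing the work; for the present theorem one only needs closedness, not the full semialgebraic structure.) Once $\bar U_F(\mcM_d)$ is closed in $\SP_d \times G_d(F)$ and $\wspec$ is proper, $\mcM_d$ is locally compact Hausdorff and $\rho|_{\bar U_{F'}(\mcM_d)}$ is a continuous \emph{proper} bijection (proper because it covers $\wspec$ which is proper on both sides and is fiberwise injective into compact Grassmannians), hence a closed map, hence a homeomorphism.

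\textbf{Step 4: The bundle $\mcE_d$ and the quotient map.} Finally, $\mcE_d = \bar U_F^* E_d(F)$ as sets, and since $\bar U_F$ is a topological embedding onto a closed subset and $E_d(F) \to G_d(F)$ is a locally trivial vector bundle, the pullback $\mcE_d \to \mcM_d$ is a topological vector bundle of rank $d$; independence of $F$ follows from Step~1--3 applied to the total spaces, using that $\rho$ lifts to the obvious bundle map $E_d(F') \dashrightarrow E_d(F)$ covering $F' \cap L \twoheadleftarrow \cdots$, compatibly with the identifications $F/L \cong L^\perp$. For the last assertion, the map $\mcM_d \times \mcR \to \mcE_d$, $(I,f) \mapsto (I, f + I)$, is the composite of $\id \times (\text{extend-and-restrict to } F)$ --- more precisely, on the fiber over $I$ it factors as $\mcR \twoheadrightarrow \mcR/I \xrightarrow{\ \cong\ } F/(F\cap I)$, and the second map is continuous in $I$ because it is the restriction to $F$ of the universal quotient $F \to F/L$ over $G_d(F)$ pulled back along $U_F$ --- so it is a continuous, fiberwise-surjective bundle map, hence a quotient map of vector bundles. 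The only subtlety is continuity of $\mcR \to \mcE_d$ jointly in $(I, f)$, which follows because for fixed $f$ the class $f + I$ depends on $I$ only through $U_F(I) = F \cap I$ together with the (continuous, $F$-independent) choice of a representative of $f$ modulo $I$ inside $F$; concretely, pick once and for all a projection $\mcR \to F$ restricting to the identity on $F$ whose kernel contains $\mfm_Y$-data appropriately --- or simply note that over the closed embedded image, continuity can be checked after composing with the embedding into $\SP_d \times G_d(F) \times F$, where it is manifest.

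I expect Step~3 --- proving that $\bar U_F(\mcM_d)$ is closed (equivalently that $\wspec$ is proper) --- to be the real work; continuity of $\rho$ (Step~2) and the bundle statements (Step~4) are essentially formal once closedness is in hand.
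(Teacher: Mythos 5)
Your overall architecture is sound, and the Step~3 device (a continuous bijection over $\SP_d$ between two closed images, proper because the Grassmannians are compact, hence a homeomorphism) is a legitimate alternative to the paper's route, which instead builds an explicit section of the affine bundle $\SP_d\times G_d(F',F)\to\SP_d\times G_d(F)$. But the step you yourself flag as ``the real work'' is not actually done, and the missing ingredient is precisely the paper's main tool: a continuous (indeed smooth) interpolation map $A:\SP_d\times\mcR\to F$, linear in $f$, with $A(Y,f)\equiv f$ modulo $\mfm_Y$ and $A(Y,-)$ vanishing on $\mfm_Y$ (Theorem~\ref{int_continuous_interpolation}, a substantive result proved in the appendix by integration over simplices). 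It is this map that converts ``$L=F\cap I$ for some ideal $I$'' into conditions stable under limits: by Lemma~\ref{int_recognizing_the_image_of_UF}, $L$ lies in the image over $Y$ iff $F\cap\mfm_Y\subseteq L$ and $A(Y,FL)\subseteq L$, in which case $I=\mfm_Y+L$; the first condition passes to the limit by Corollary~\ref{int_convergence_in_Sd} (itself a one-line consequence of $A$), and the second by taking limits of $A(Y_p,gf_p)\in L_p$. Your assertion that the defining conditions ``translate into closed algebraic conditions on $(Y,L)$'' assumes exactly this and cannot be replaced by an appeal to the semialgebraic description, which in the paper is proved by the same lemma. Moreover, even once a limit ideal $I=\mfm_Y+L$ of codimension $d$ is produced, it remains to show $\wspec(I)=Y$: a priori the weights could redistribute over $|Y|$ or a point could drop out of the spectrum. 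The paper needs a separate, genuinely nontrivial argument here --- splitting each $I_p$ by primary decomposition along disjoint closed neighbourhoods of the $y_i$, passing to limits of the pieces in $G_{k_i}(F)$, and invoking Proposition~\ref{int_injectivity_proposition} and general position to identify $I$ with the intersection of the limit ideals. Your Step~3 is silent on this point.

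The same gap resurfaces in Step~4: joint continuity of $\mcM_d\times\mcR\to\mcE_d$ is not ``manifest'' after embedding $\mcE_d$ into $\SP_d\times G_d(F)\times F$. One needs a representative of $f$ modulo $I$ lying in $F$ and varying continuously in $(I,f)$ (with $\mcR$ carrying the $C^\infty$ topology); the paper obtains it as $A(\wspec I,f)$ and factors the map as $\mcM_d\times\mcR\to\mcM_d\times F\to\mcE_d$. Your ``pick once and for all a projection $\mcR\to F$ whose kernel contains the $\mfm_Y$-data appropriately'' is nothing other than the interpolation map, whose continuous existence is the point at issue. (Once that is granted, your remaining assertions --- the reduction to a common transversal $F''$, continuity of $(Y,L)\mapsto(Y,F\cap L)$ on the transversality locus, the lift of the comparison to the bundles $E_d(F)$, and the quotient-map claim via a section coming from an inner product on $F$ --- are all fine and essentially as in the paper.)
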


We will now explain in what sense $\mcM_d$ possesses a (singular) smooth structure. Firstly a function $\SP_d\ra\bbR$ is said to be smooth if the composition $M^d\ra\SP_d\ra\bbR$ is smooth (thus we are thinking of $\SP_d$ as an orbifold). As a subset of $\SP_d\times G_d(F)$ we may now consider on $\mcM_d$ the sheaf of restrictions of smooth functions. We will show in Theorem~\ref{int_fibrewise_semialgebraicity} that this structure is independent of $F$.

To endow $\mcM_d$ with a richer structure observe that one can embed $\SP_d\times G_d(F)$ in a (trivial) projective bundle $\SP_d\times\bbP(\Lambda^{r-d}F)$ where $r=\dim F$. The Pl\"ucker embedding $G_d(F)\ra\bbP(\Lambda^{r-d}F)$ sends
\[\lspan(f_1,\ldots,f_{r-d})\longmapsto[f_1\wedge\cdots\wedge f_{r-d}]\]
Therefore one can also speak about a smooth fibrewise algebraic structure given by the sheaf of functions which are regular rational in the $G_d(F)$ direction and whose coefficients are smooth in the $\SP_d$ direction, $U\times U'\mapsto C^\infty(U)\otimes\mcO(U')$.

We will now address the question of how bad the singularities of $\mcM_d$ are.

\begin{definition}\label{definition_semialgebraic}
For $M=\bbR^m$ there is a canonical choice of a transversal $F\subseteq\mcR$, namely the set of all polynomial functions of degree less than $d$. As a subspace of $(\bbR^m)^d/\Sigma_d\times G_d(F)$ we may ask that $\mcM_d$ is semialgebraic\footnote{More precisely the pullback to $(\bbR^m)^d\times G_d(F)$ should be semialgebraic. For other (non-polynomial) transversals the semialgebraicity should not be expected.}. Assuming this and covering any smooth manifold by charts this makes $\mcM_d$ locally semialgebraic (in the special charts described above). We express this by saying that $\mcM_d$ is \emph{chart-wise semialgebraic}.
\end{definition}

\begin{ttheorem} \label{int_fibrewise_semialgebraicity}
There is a well defined smooth (and fibrewise algebraic) structure on $\mcM_d$ as a subset of $\SP_d\times G_d(F)$ which is independent of $F$ as long as $\bar U_F$ is injective. It is a closed chart-wise semialgebraic subset and the same is true for $\mcE_d$.
\end{ttheorem}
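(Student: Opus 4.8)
The plan is to bootstrap from the local, polynomial model on $\bbR^m$ to the general manifold via charts, using the comparison machinery of Theorem~\ref{int_independence_of_topology} to glue. First I would establish the statement in the canonical chart: for $M=\bbR^m$ with $F$ the space of polynomials of degree $<d$, I would pull everything back to $(\bbR^m)^d\times G_d(F)$ and exhibit $\mcM_d$ there as a semialgebraic set cut out by explicit polynomial conditions. The conditions come from the defining equations of $\mcM_d$: a pair $(Y,L)$ with $Y=\{y_1,\dots,y_d\}$ (ordered) lies in the (pullback of the) image of $\bar U_F$ iff $L$ contains $F\cap\mfm_Y^{\,\bullet}$ for the appropriate multiplicities and iff $L$ is an ideal modulo $F$ in a sense that can be tested on finitely many products of polynomials. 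Concretely, since every $I\in\mcM_d$ contains $\mfm_Y$ (Lemma~\ref{lemma_polynomial_submodule_containment}) and is determined by $F\cap I$ (Proposition~\ref{int_injectivity_proposition}), the fibre of $\mcM_d$ over a fixed $Y$ is the set of codimension-$d$ subspaces $L\subseteq F$ that (i) contain the jet-vanishing subspace $F\cap\mfm_Y$ and (ii) are closed under multiplication by the finitely many generating polynomials $x_1,\dots,x_m$ modulo $L$ — both being Zariski-closed (indeed semialgebraic, after clearing Plücker denominators) conditions on $(Y,L)$. The multiplicities $k_i$ (coincidences among the $y_j$) are themselves semialgebraic strata of $(\bbR^m)^d$, so the total space is a finite union of semialgebraic pieces, hence semialgebraic; closedness I would get from the properness in Theorem~\ref{int_independence_of_topology}.

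Second, I would transport this to an arbitrary chart. A special chart on $\SP_d\times G_d(F)$ comes from a chart $\varphi:U\xra{\ \cong\ }\bbR^m$ on $M$ together with a compatible transversal, i.e. one obtained (as in the proof of Theorem~\ref{int_existence_of_transversal}) by restricting polynomials under an embedding $\iota$ that agrees with $\varphi$ on $U$. The content of Definition~\ref{definition_semialgebraic} is precisely that in such a chart $\mcM_d$ is, by fiat, locally semialgebraic; so the real work is to show that changing the chart, or changing the compatible transversal $F\subseteq F'$, preserves semialgebraicity. For the change of transversal this is exactly the map $(Y,L)\mapsto(Y,F\cap L)$ of Theorem~\ref{int_independence_of_topology}, which I must upgrade from a homeomorphism to a semialgebraic isomorphism: intersecting a subspace with a fixed subspace is a rational (Plücker-polynomial) operation wherever it has the right dimension, and on the image of $\mcM_d$ it always does by Proposition~\ref{int_injectivity_proposition}, so both the map and its inverse are semialgebraic. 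For the change of chart one composes with a diffeomorphism of $\bbR^m$ in the $\SP_d$ direction and with the induced linear (hence algebraic) reparametrization of the polynomial transversals; the $C^\infty$-in-$\SP_d$, algebraic-in-$G_d(F)$ shape of the structure sheaf is manifestly preserved. Combining these two patchings gives that the $\SP_d\times G_d(F)$-structure on $\mcM_d$ is well defined independently of $F$, carries the claimed smooth/fibrewise-algebraic sheaf, and is chart-wise semialgebraic; the identical argument applied to $\mcE_d\subseteq\SP_d\times G_d(F)\times F$ (whose extra coordinates are just the tautological ones of $E_d(F)$, given by linear equations) handles the last clause.

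The main obstacle I expect is the base case, specifically proving semialgebraicity of the fibrewise condition ``$L$ is the mod-$L$ reduction of an honest ideal'' rather than merely ``$L$ contains $\mfm_Y\cap F$''. The containment $\mfm_Y\subseteq I$ gives one inclusion for free, but one must also rule out subspaces $L$ that contain $\mfm_Y\cap F$ yet are not of the form $F\cap I$ for any ideal $I$; equivalently one needs that the multiplication maps $\mcR/\mfm_Y\to\mcR/\mfm_Y$ restrict compatibly, and that this is detected by a \emph{fixed finite} list of polynomial identities uniform in $Y$. The uniformity is the delicate point: as the points $y_j$ collide, the relevant jet spaces jump dimension, so one must argue that the ideal condition, written in Plücker coordinates with denominators cleared, extends across these collision loci to a genuinely semialgebraic (in fact real-algebraic) subset — this is where one genuinely uses that $\mfm_Y$ varies ``semialgebraically in $Y$'' over all of $\SP_d$, and it is essentially the precise form of Goodwillie's claim that the theorem is asserting. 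Once the base case is nailed down with the correct finite defining system, the chart-change and transversal-change steps are routine applications of the Tarski–Seidenberg / Plücker formalism together with Theorems~\ref{int_existence_of_transversal} and~\ref{int_independence_of_topology}.
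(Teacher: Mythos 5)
Your plan correctly identifies where the difficulty lies, but it leaves precisely that difficulty unresolved, and the paper's proof hinges on a tool you never introduce: the interpolation map $A:\SP_d\times\mcR\to F$ of Theorem~\ref{int_continuous_interpolation} (Kergin interpolation, which is polynomial in $Y$ in the polynomial chart, Example~\ref{example_Kergin_interpolation}). This map does the two jobs your sketch is missing. First, it makes the ``ideal condition'' uniform across collision loci: Lemma~\ref{int_recognizing_the_image_of_UF} states that $L=U_F(I)$ for some codimension-$d$ ideal $I\supseteq\mfm_Y$ if and only if $F\cap\mfm_Y\subseteq L$ and $A(Y,FL)\subseteq L$, and the latter becomes explicit semialgebraic equations $(\id_F-\alpha\alpha^T)A(\{x^{k}\},pf)=0$ in Stiefel coordinates. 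Your closing admission that the uniformity in $Y$ ``is essentially the precise form of Goodwillie's claim that the theorem is asserting'' is an acknowledgement that the base case is not proved; the paper proves it exactly here. Second, the independence of the smooth and fibrewise algebraic structure for a general manifold and a general (non-polynomial) transversal is obtained from the smooth section $L\mapsto L+(\id-A(Y,-))(F^\perp)$ of the affine bundle $\SP_d\times G_d(F',F)\to\SP_d\times G_d(F)$; your argument that ``intersection with $F$ is Pl\"ucker-rational, hence the inverse is semialgebraic'' gives at best semialgebraicity of the inverse in a polynomial chart, not the smooth-in-$\SP_d$, algebraic-in-$G_d(F)$ structure claimed for arbitrary $F$ with $\bar U_F$ injective.

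There is also an outright error in your base case: the asserted ``iff'' is false. Conditions (i)--(ii) characterize $U_F(I)$ for ideals $I$ of codimension $d$ \emph{containing} $\mfm_Y$, but such $I$ need not satisfy $\wspec(I)=Y$, because the weights can redistribute among the points of $|Y|$. For instance on $M=\bbR^2$ take $Y=\{y_1^2,y_2\}$ and $I=\{f\ |\ j^1_{y_1}f=0\}$: then $\mfm_Y=(\mfm_{y_1})^2\cap\mfm_{y_2}\subseteq I$, $\codim I=3$, and the image of $F\cap I$ in $\mcR/\mfm_Y$ is an ideal, yet $\wspec(I)=\{y_1^3\}\neq Y$. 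Since $\bar U_F$ records the weighted spectrum in the $\SP_d$-coordinate and $U_F$ is injective (Proposition~\ref{int_injectivity_proposition}), the pair $(Y,F\cap I)$ satisfies your equations but does not lie in $\bar U_F\mcM_d$; so your equations cut out a strictly larger set, and stratifying $(\bbR^m)^d$ by collision pattern does not help, as the example sits inside one stratum. The paper avoids this by passing through the primary decomposition: it writes $\mcM_d$ as the union of the $\mcM_{k_1,\ldots,k_n}$, parametrizes each by $M^{(n)}\times V_{k_1}(F)\times\cdots\times V_{k_n}(F)$ with a polynomial ``intersection'' map into $\Lambda^{r-d}F$, imposes the two conditions of Lemma~\ref{int_recognizing_the_image_of_UF} separately at each point $x_i$ (where a single-point spectrum forces weight $=$ codimension, so no redistribution can occur), and then concludes by Tarski--Seidenberg. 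To repair your proposal you would need both this per-point decomposition (or an equivalent device fixing the local weights) and the polynomial interpolation map; closedness can indeed be quoted from Theorem~\ref{int_independence_of_topology} as you say.
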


We will now introduce our main tool in this section. By a \emph{continuous interpolation map} we understand a continuous map
\[A:\SP_d\times\mcR\ra F\]
(for some $F\pitchfork\mcP_d$), linear in the first variable and such that $A(Y,f)\equiv f$ modulo $\mfm_Y$. By definition for each $Y\in\SP_d$ and $f\in\mcR$ such an $A(Y,f)$ exists but there is no obvious choice and in particular it is not clear that there is a continuous way of choosing it. We say that $A$ is a \emph{smooth interpolation map} if moreover for each $f\in\mcR$ the map $\SP_d\xra{A(-,f)}F$ is smooth. There is an analogous notion of a \emph{polynomial interpolation map} for $M=\bbR^m$ in which $A(-,f)$ is required in addition to be polynomial when $f$ is polynomial.

When $F$ consists of polynomials on $M=\bbR^m$ a preferred choice of an interpolation was found by Kergin in \cite{Kergin} on the existential level and in \cite{MM} explicitly. We give a concrete description in the non-polynomial global case, main result being the following.

\begin{ttheorem} \label{int_continuous_interpolation}
Let $M$ be a smooth manifold and $F\pitchfork\mcP_d$. For each $Y\in\SP_d$ there exists a smooth interpolation map $A$ for which $A(Y,f)=0$ whenever $f\in\mfm_Y$.
\end{ttheorem}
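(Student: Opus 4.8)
The plan is to build the interpolation map $A$ by gluing together local polynomial interpolation maps (the Kergin/Micchelli–Milman operators) over charts of $M$, using a smooth partition of unity on $\SP_d$ that respects the stratification by support. First I would recall that the obstruction to a single global polynomial formula is that points of $Y\in\SP_d$ can be arbitrarily spread out, so there is no canonical ``degree'' available globally; the remedy is locality. Fix a locally finite atlas $\{(V_\alpha,\varphi_\alpha)\}$ of $M$ with $\varphi_\alpha:V_\alpha\xrightarrow{\ \cong\ }\bbR^m$, and for each finite subset $S\subseteq A$ of indices consider the open set
\[
W_S=\{Y\in\SP_d\ |\ |Y|\subseteq\textstyle\bigcup_{\alpha\in S}V_\alpha\}\subseteq\SP_d,
\]
together with a finer cover by sets $W_{S,\beta}$ on which each point of $|Y|$ together with its weight is ``assigned'' to a single chart $V_\beta$, $\beta\in S$; this is possible because near a fixed configuration the support points stay in a fixed finite union of charts and can be separated. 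On such a piece, after transporting via $\varphi_\beta$, the configuration restricted to $V_\beta$ becomes a point of $\SP_{d_\beta}(\bbR^m)$ for some $d_\beta\le d$, and I would apply the Micchelli–Milman explicit polynomial interpolation formula from \cite{MM} (it is polynomial in the nodes, hence smooth, and depends smoothly on $Y$; moreover it annihilates functions in $\mfm_Y$ since it reproduces the relevant Taylor data). Pulling the resulting polynomial back by $\varphi_\beta^{-1}$, cutting off by a bump function supported in $V_\beta$, and summing over $\beta$ gives a local interpolation map $A_{S,\beta}$ landing in some finite-dimensional $F_S\pitchfork\mcP_d$.

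Next I would patch: choose a smooth partition of unity $\{\rho_{S,\beta}\}$ on $\SP_d$ subordinate to the cover $\{W_{S,\beta}\}$ — here ``smooth'' means smooth after pullback along $M^d\to\SP_d$, which exists by the orbifold/averaging argument (take an ordinary partition of unity on $M^d$ subordinate to the preimage cover and average over $\Sigma_d$). Then set
\[
A(Y,f)=\sum_{S,\beta}\rho_{S,\beta}(Y)\,A_{S,\beta}(Y,f)\in F:=\textstyle\sum_S F_S,
\]
a finite sum for each fixed $Y$ by local finiteness; $F$ is finite dimensional because only finitely many chart-tuples $S$ are needed to cover any given compact part — and when $M$ is compact, finitely many altogether. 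Linearity in $f$ is clear. For the congruence $A(Y,f)\equiv f\pmod{\mfm_Y}$: each $A_{S,\beta}(Y,f)\equiv f\pmod{\mfm_Y}$ because the Micchelli–Milman polynomial matches the Taylor jet $j^{k_i-1}_{y_i}f$ at every node $y_i$ of $Y$ lying in its chart, and the cutoff is $\equiv1$ near those nodes; since $\sum\rho_{S,\beta}\equiv1$, the convex combination still matches all the required jets, i.e. lies in the coset $f+\mfm_Y$. The annihilation property $f\in\mfm_Y\Rightarrow A(Y,f)=0$ follows because each $A_{S,\beta}$ was arranged to have this property (the Micchelli–Milman operator sends a function with vanishing $(k_i-1)$-jets at all nodes to $0$), and this survives the finite linear combination.

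The main obstacle I expect is twofold and lies in the patching, not the local construction. First, ensuring that the cover $\{W_{S,\beta}\}$ can be chosen so that the chart-assignment of support points is \emph{locally constant in a neighbourhood of each configuration} — support points may collide (their weights add) or migrate between overlapping charts as $Y$ varies, so one must make the local pieces agree on overlaps up to the coset $f+\mfm_Y$ uniformly, which is what makes the convex combination legitimate. Second, keeping $F$ finite dimensional: one must verify that only finitely many index-tuples $S$ are genuinely needed (clear for compact $M$; for general $M$ one can allow $F$ to be merely the union and note the theorem is stated for a single fixed $Y$, so in fact one only needs the piece of the construction near that $Y$, which trivialises the finiteness issue). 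Smoothness of $A(-,f)$ reduces to smoothness of the $\rho_{S,\beta}$ and of $Y\mapsto A_{S,\beta}(Y,f)$, both of which hold by construction — the latter because the Micchelli–Milman formula is a polynomial (hence smooth) expression in the nodes and these depend smoothly on $Y$ after pullback to $M^d$.
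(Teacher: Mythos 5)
Your gluing step (partition of unity on $\SP_d$, using that interpolation maps at a fixed $Y$ form an affine space) is essentially the paper's own globalization, and your instinct to use Kergin/Micchelli--Milman locally is close in spirit to the appendix, whose local operator reduces to Kergin interpolation when the target is the space of polynomials. But there is a genuine gap in the local step: the theorem is needed (and stated) for a \emph{prescribed} transversal $F\pitchfork\mcP_d$, i.e.\ the map must be $A:\SP_d\times\mcR\to F$ with values in that given $F$ --- this is exactly how it is used later, e.g.\ in Corollary~\ref{int_convergence_in_Sd}, where $f_p=f-A(Y_p,f)$ must lie in $F$, and in the construction of the section of $\SP_d\times G_d(F',F)\to\SP_d\times G_d(F)$. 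Your construction never uses the hypothesis $F\pitchfork\mcP_d$ and produces values in a self-made space $\sum_S F_S$ of cut-off chart polynomials instead. This cannot be repaired by post-composing with a projection onto $F$: choosing, smoothly in $Y$, a linear map to $F$ that is the identity modulo $\mfm_Y$ is precisely the problem being solved. The paper's appendix addresses it by building the simplex-integral operator $G(Y,-)$ (smooth and symmetric in the nodes), restricting it to a complement $D\subseteq F$ of $\mfm_Y$, and inverting $G_D$ over a neighbourhood of the diagonal configuration, which is the step your proposal is missing.

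A second, smaller error: your justification ``the Micchelli--Milman operator sends a function with vanishing $(k_i-1)$-jets at all nodes to $0$'' is false for $m\geq 2$ once the nodes are not all equal --- the Kergin interpolant also matches simplex-integral (Radon-type) data, and vanishing jets at distinct nodes do not force those integrals to vanish; this is exactly the paper's remark after Corollary~\ref{int_vanishing_of_integrals_implies_vanishing_of_jets}. Since the annihilation is only required at the single fixed $Y$, this can be salvaged by ensuring that at $Y$ each block of nodes is a single point with multiplicity (one support point per chart-block, and only such patches carrying weight at $Y$), but as written your argument both overstates the property and does not arrange the cover to guarantee it.
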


The proof is postponed to the appendix.

\begin{remark}
We note that According to Example~\ref{example_Kergin_interpolation} if $F$ consists of polynomials of degree less than $d$ on $M=\bbR^m$ then the constructed interpolation map is the Kergin interpolation and is polynomial.
\end{remark}

Before we proceed further we state an easy corollary which will be crucial for the proof of the properness of $\wspec$.

\begin{corollary} \label{int_convergence_in_Sd}
Let $Y_p\in\SP_d$ ($p=1,2,\ldots$) be a sequence converging to $Y$. Then every $f\in F\cap \mfm_Y$ is a limit of some sequence $f_p\in F\cap \mfm_{Y_p}$.
\end{corollary}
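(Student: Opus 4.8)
The plan is to derive Corollary~\ref{int_convergence_in_Sd} directly from the existence of a smooth (hence in particular continuous) interpolation map provided by Theorem~\ref{int_continuous_interpolation}. Fix the transversal $F\pitchfork\mcP_d$ and an interpolation map $A:\SP_d\times\mcR\ra F$ with the property that $A(Z,f)=0$ whenever $f\in\mfm_Z$ (the point being that, by Theorem~\ref{int_continuous_interpolation}, such an $A$ exists at least near the given limit point $Y$; one applies the theorem with that $Y$). The key observation is that for each fixed $f\in\mcR$ the assignment $Z\mapsto A(Z,f)$ is a continuous map $\SP_d\ra F$, and that $A(Z,f)\in F$ always represents $f$ modulo $\mfm_Z$, so in particular $f-A(Z,f)\in\mfm_Z$ for every $Z$.

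Given this, the proof is essentially a one-line continuity argument. Let $f\in F\cap\mfm_Y$. Set $f_p:=f-A(Y_p,f)\in F$. Since $f-A(Y_p,f)\in\mfm_{Y_p}$ by the defining property of an interpolation map, we have $f_p\in F\cap\mfm_{Y_p}$, which is exactly the required membership. It remains to check $f_p\to f$. By continuity of $A(-,f):\SP_d\ra F$ and the assumption $Y_p\to Y$ in $\SP_d$, we get $A(Y_p,f)\to A(Y,f)$ in $F$. But $f\in\mfm_Y$, so the special property of $A$ guarantees $A(Y,f)=0$; hence $A(Y_p,f)\to 0$ and therefore $f_p=f-A(Y_p,f)\to f$ in $F$. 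Since $F$ is finite dimensional its topology is unambiguous, so this is genuine convergence. That completes the argument.

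There is no real obstacle here: all the work has been packaged into Theorem~\ref{int_continuous_interpolation}, whose proof is deferred to the appendix. The only point requiring a word of care is that Theorem~\ref{int_continuous_interpolation} as stated produces, for each chosen $Y$, an interpolation map $A$ that vanishes on $\mfm_Y$ for \emph{that particular} $Y$; since in the corollary we have a single distinguished limit point $Y$, we simply invoke the theorem at that $Y$, and the argument above goes through verbatim. One should also note that it is harmless that $A$ is only asserted to be continuous (indeed smooth) — continuity is all that is used, and the evaluation-at-$f$ map $A(-,f)$ inherits it immediately from the joint continuity of $A$ together with the linearity in the second variable (which guarantees $A(-,f)$ is well defined for the fixed vector $f$).
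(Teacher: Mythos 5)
Your proof is correct and is exactly the paper's argument: the paper likewise sets $f_p=f-A(Y_p,f)$ and concludes $f_p\to f$ from continuity of $A(-,f)$ together with $A(Y,f)=0$ for $f\in\mfm_Y$. Your additional remarks (membership $f_p\in F\cap\mfm_{Y_p}$, invoking Theorem~\ref{int_continuous_interpolation} at the distinguished point $Y$) only spell out details the paper leaves implicit.
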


\begin{proof}
Take $f_p=f-A(Y_p,f)$. Then $f_p\ra f$ since $A(Y,f)=0$.
\end{proof}

\begin{proof}[Proof of Theorem~\ref{int_independence_of_topology}, the independence part, and Theorem~\ref{int_fibrewise_semialgebraicity}]
For a pair of linear subspaces $F\subseteq F'$ as in the statement we denote by $G_d(F',F)$ the open subset of $G_d(F')$ consisting of those $L\subseteq F'$ which are transverse to $F$. The map $G_d(F',F)\lra G_d(F)$ sending $L$ to $F\cap L$ is a smooth affine bundle and hence so is
\[\SP_d\times G_d(F',F)\lra\SP_d\times G_d(F)\]
Moreover it respects the inclusions $\bar U_F,\bar U_{F'}$. Under the assumption $F\pitchfork\mcP_d$ we will now construct a smooth section of this bundle with $\bar U_{F'}\mcM_d$ in its image. It is given by sending $L\in G_d(F)$ in the fibre over $Y$ to
\[L+(\id-A(Y,-))(F^\perp)\]
where $F^\perp$ is any subspace of $F'$ complementary to $F$ and $A$ is a smooth interpolation map with values in $F$. The claim is verified by the observation that $(\id-A(Y,-))(F^\perp)\subseteq F'\cap\mfm_Y$ and maps isomorphically onto $F'/F$.

Since both the bundle projection $\SP_d\times G_d(F',F)\ra\SP_d\times G_d(F)$ and its section are clearly smooth we have just shown that there is a well-defined smooth structure on $\mcM_d$. In fact they are induced by fibrewise linear maps on the respective exterior powers containing the Grassmannians. Namely the bundle projection is induced by
\[\Lambda^{r'-d}F'\cong\bigoplus_{i+j=r'-d}\Lambda^iF\otimes\Lambda^jF^\perp\xlra{projection}\Lambda^{r-d}F\otimes\Lambda^sF^\perp\cong\Lambda^{r-d}F\]
and the section is, on the fibre over $Y$, induced by ``taking the wedge product'' with $(v_1-A(Y,v_1))\wedge\cdots\wedge(v_s-A(Y,v_s))$ where $(v_1,\ldots,v_s)$ is some basis of $F^\perp$.

Therefore $\mcM_d$ has a well defined smooth fibrewise algebraic structure. Finally we prove that for the canonical projection $\pi:M^d\times G_d(F)\ra\SP_d\times G_d(F)$,
\[\pi^{-1}\mcM_d\subseteq M^d\times G_d(F)\subseteq M^d\times\bbP(\Lambda^{r-d}F)\]
is a chart-wise semialgebraic subset. First of all $\mcM_d$ is a union (indexed by unordered collections of positive integers $k_1,\ldots,k_n$ with sum $d$) of subsets $\mcM_{k_1,\ldots,k_n}$ of those ideals whose spectrum has weights $k_1,\ldots,k_n$. To prove chart-wise semialgebraicity of each $\pi^{-1}\mcM_{k_1,\ldots,k_n}$ consider the following partially defined map
\begin{align}
M^{(n)}\times V_{k_1}(F)\times\cdots\times V_{k_n}(F) & \lra M^d\times G_d(F) \label{eqn_stiefel_to_grassmann}\\
(x_1,\ldots,x_n,\alpha_1,\ldots,\alpha_n) & \longmapsto ((x_1^{k_1},\ldots,x_n^{k_n}),(\lspan\alpha_1)\cap\cdots\cap(\lspan\alpha_n)) \nonumber
\end{align}
with $V_{k_i}(F)$ the Stiefel manifold of orthonormal ($r-k_i$)-frames in $F$. The map is not hard to be induced by a polynomial map
\[F^{\times(r-k_1)}\times\cdots\times F^{\times(r-k_n)}\lra\Lambda^{r-k_1}F\otimes\cdots\otimes\Lambda^{r-k_n}F\lra\Lambda^{r-d}F\]
where the first map corresponds to passing from $V_{k_i}(F)$ to $G_{k_i}(F)$ while the second (linear) to taking the intersection of the subspaces.

By Tarski-Seidenberg theorem we are left to show that there exists a chart-wise semialgebraic subset $\mcX_{k_1,\ldots,k_n}$ of the source of (\ref{eqn_stiefel_to_grassmann}) which maps onto $\pi^{-1}\mcM_{k_1,\ldots,k_n}$. The primary decomposition of ideals provides such a subset
\[\mcX_{k_1,\ldots,k_n}=\{(x_1,\ldots,x_n,\alpha_1,\ldots,\alpha_n)\ |\ \lspan\alpha_i=U_F(I_i)\textrm{ with }\spec(I_i)=\{x_i\}\}\]
whose chart-wise semialgebraicity is verified by the following proposition.
\end{proof}

\begin{proposition}
The subset $\mcX_{k_1,\ldots,k_n}$ is chart-wise semialgebraic.
\end{proposition}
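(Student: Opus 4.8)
The plan is to unwind the definition of $\mcX_{k_1,\ldots,k_n}$, reduce it to a statement about one chart on $M$ and one weight $k\le d$, and then exhibit that piece as cut out by polynomial equations after passing through the variety of ideals of a fixed finite-dimensional algebra; Tarski--Seidenberg then handles the rest. Throughout we are in the situation $F\pitchfork\mcP_d$.

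First I would reduce to a single factor. If $I_i$ is one of the primary components of an ideal in $\mcM_{k_1,\ldots,k_n}$, then $\spec(I_i)=\{x_i\}$ and $\dim\mcR/I_i=k_i$, so Lemma~\ref{lemma_polynomial_submodule_containment} applied to $\wspec(I_i)=\{x_i^{k_i}\}$ gives $(\mfm_{x_i})^{k_i}\subseteq I_i$. Writing $A_{x}=\mcR/(\mfm_{x})^{k}$ for the $(k{-}1)$-jet algebra at $x$ and $q=q_{x,k}$ for the quotient map, it follows that $I\mapsto q(I)$ is a bijection from the ideals of $\mcR$ with spectrum $\{x\}$ and colength $k$ onto the ideals of the finite-dimensional algebra $A_x$ of vector-space colength $k$, with $F\cap I=(q|_F)^{-1}(q(I))$; moreover $q|_F$ is onto because $(\mfm_x)^k$ occurs as the $x$-primary part of some $\mfm_Y$ with $Y\in\SP_d$, so surjectivity of $F\to\mcR/\mfm_Y$ (the content of $F\pitchfork\mcP_d$) implies, after projecting onto the $x$-factor, surjectivity of $q_{x,k}|_F$. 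Hence, inside $M^{(n)}\times V_{k_1}(F)\times\cdots\times V_{k_n}(F)$, the set $\mcX_{k_1,\ldots,k_n}$ is the intersection of the (open, hence chart-wise semialgebraic) locus where the $x_i$ are pairwise distinct with the product over $i$ of the loci
\[\mcX_{k}=\bigl\{(x,\alpha)\in M\times V_{k}(F)\ \big|\ \lspan\alpha=(q_{x,k}|_F)^{-1}(\bar I)\ \text{for some colength-}k\ \text{ideal}\ \bar I\subseteq A_x\bigr\},\]
so it suffices to show each $\mcX_{k}$ is chart-wise semialgebraic.

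Next I would trivialize the jet bundle over a special chart $\varphi:U\to\bbR^m$ (in the canonical case $M=\bbR^m$ with $F$ the polynomials of degree $<d$). Using the shifted-monomial basis $(t-\xi)^\beta$, $|\beta|<k$, of $A_x$ at $\xi=\varphi(x)$, whose structure constants do not depend on $\xi$, every $A_x$ is identified with one fixed algebra $A=\bbR[t_1,\ldots,t_m]/(t_1,\ldots,t_m)^k$ of some dimension $N$. Two facts then close the argument. First, the set $\mcV_{k}\subseteq G_{k}(A)$ of codimension-$k$ subspaces that are ideals (equivalently, stable under multiplication by each $t_j$) is a real projective variety, cut out in Plücker coordinates by equations bilinear in the Plücker point of $W$ and in the entries of multiplication-by-$t_j$. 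Second, in the fixed bases the surjection $q_{x,k}|_F:F\to A$ has matrix equal to the array of degree-$<k$ Taylor coefficients at $\xi$ of the basis elements of $F$ — polynomial in $\xi$ in the canonical case — so the induced closed embedding $\rho_\xi:G_k(A)\hookrightarrow G_k(F)$, $W\mapsto(q_{x,k}|_F)^{-1}(W)$, is given on the ambient exterior powers by a $\xi$-polynomial family of linear maps. Granting these, $\{(\xi,L)\in\varphi(U)\times G_k(F)\mid L\in\rho_\xi(\mcV_k)\}$ is the image of the semialgebraic set $\varphi(U)\times\mcV_k$ under the polynomial map $(\xi,W)\mapsto(\xi,\rho_\xi(W))$, hence semialgebraic by Tarski--Seidenberg (or, since $\rho_\xi$ is a closed embedding, directly cut out by the $\xi$-polynomial transforms of the defining equations of $\mcV_k$); and $\mcX_k\cap(U\times V_k(F))$ is its preimage under the polynomial map $(x,\alpha)\mapsto(\varphi(x),\lspan\alpha)$, which is polynomial because passing from an orthonormal frame to its span is a morphism $V_k(F)\to G_k(F)$ of real algebraic varieties. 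This gives the chart-wise semialgebraicity of $\mcX_k$, and therefore of $\mcX_{k_1,\ldots,k_n}$.

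The hard part is the pair of facts in the previous paragraph: that the locus of colength-$k$ ideals in a fixed finite-dimensional algebra is genuinely algebraic inside the Grassmannian, and — this is really what the word \emph{chart-wise} encodes — that the jet algebras $A_x$ assemble into an algebraic, not merely smooth, family over $M$ in the canonical charts. The shifted-monomial trivialization together with the polynomiality of the canonical transversal $F$ are exactly what make this algebraic; for a general (non-polynomial) $F$ only smoothness in the $x$-direction survives, which is consistent with Definition~\ref{definition_semialgebraic}.
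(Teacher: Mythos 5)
Your proposal is correct in outline, but it takes a genuinely different route from the paper's. The paper invokes Lemma~\ref{int_recognizing_the_image_of_UF} to describe $\mcX_{k_1,\ldots,k_n}$ by the two conditions $F\cap(\mfm_{x_i})^{k_i}\subseteq L_i$ and $A(\{x_i^{k_i}\},FL_i)\subseteq L_i$, and then turns these directly into explicit polynomial equations: the first via vanishing of minors of the fibrewise jet-prolongation matrix, the second via $(\id_F-\alpha_i\alpha_i^T)A(\{x_i^{k_i}\},pf_{i,j})=0$, using crucially that in the canonical chart the interpolation map is the Kergin interpolation and hence polynomial (Theorem~\ref{int_continuous_interpolation} together with Example~\ref{example_Kergin_interpolation}); no further quantifier elimination is needed at this step. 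You avoid the interpolation map altogether: you pass to the truncated algebra $A=\bbR[t_1,\ldots,t_m]/(t_1,\ldots,t_m)^k$, use that the locus $\mcV_k$ of codimension-$k$ ideals in $G_k(A)$ is algebraic and that the preimage operation under $q_{\xi,k}|_F$ is induced by a $\xi$-polynomial family of linear maps on exterior powers, and then eliminate the auxiliary ideal variable by Tarski--Seidenberg. What the paper's route buys is explicit defining equations at the cost of the interpolation machinery from the appendix; your route is more self-contained here (only Lemma~\ref{lemma_polynomial_submodule_containment} and $F\pitchfork\mcP_d$ enter), at the cost of an extra quantifier elimination and of leaving some routine constructions implicit: the converse half of your bijection (that $q^{-1}(\bar I)$ has spectrum exactly $\{x\}$, which follows since $A_x$ is local, so a proper ideal pulls back into $\mfm_x$), the explicit $\xi$-polynomial section of $q_{\xi,k}|_F$ giving the Pl\"ucker-level formula for $\rho_\xi$, and the equations cutting out $\mcV_k$. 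One small misstatement to fix: ``open, hence chart-wise semialgebraic'' is not a valid implication in general; the pairwise-distinctness locus is semialgebraic in charts simply because it is given by the inequations $x_i\neq x_j$ (and it is in any case already built into the ambient $M^{(n)}$).
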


\begin{proof}
Let us denote $L_i=\lspan\alpha_i$. According to Lemma~\ref{int_recognizing_the_image_of_UF} the subset $\mfX_{k_1,\ldots,k_n}$ is described by the following conditions
\begin{itemize}
\item $F\cap(\mfm_{x_i})^{k_i}\subseteq L_i$ and
\item $A(\{x_i^{k_i}\},FL_i)\subseteq L_i$.
\end{itemize}
Since the codimension of $/L_i$ in $F$ is $k_i$ the first condition is equivalent to the image of $L_i$ under $F\ra J^{k_i-1}_{x_i}(M,\bbR)$ having codimension $k_i$.

Now we write the above conditions as equations for $(\alpha_1,\ldots,\alpha_n)$. The first is seen by identifying $J^{k_i-1}(M,\bbR)$ in the chart with a trivial vector bundle $M\times\bbR^t$ and viewing the jet prolongation as a fibrewise linear map $M\times F\ra M\times\bbR^t$. The condition is then equivalent to the vanishing of certain minors of an element of $\hom(\bbR^{r-k_i},\bbR^t)$ which depends polynomially on the $x_i$ and $\alpha_i$.

For the second condition let us denote also by $\alpha_i$ the matrix consisting of coordinates of the basis vectors in a fixed orthonormal basis of $F$. Then $\alpha_i\alpha_i^T$ is the orthogonal projection onto $L_i=\lspan\alpha_i$. Writing $\alpha_i=(f_{i,1},\ldots,f_{i,r-k_i})$ we may for $p\in F$ rewrite the second condition as $(\id_F-\alpha_i\alpha_i^T)A(\{x_i\}^{k_i},pf_{i,j})=0$.
\end{proof}

\begin{lemma} \label{int_recognizing_the_image_of_UF}
Let $F\subseteq\mcR$ be transverse to $\mfm_Y$ and $L\in G_d(F)$. Then $L=U_F(I)$ for some $I\in\mcM_d$ with $\mfm_Y\subseteq I$ if and only if
\begin{itemize}
\item $F\cap\mfm_Y\subseteq L$ and
\item $A(Y,FL)\subseteq L$
\end{itemize}
in which case $I=\mfm_Y+L$.
\end{lemma}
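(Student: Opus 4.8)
The plan is to characterize, among all codimension-$d$ subspaces $L\subseteq F$, exactly those that arise as $F\cap I$ for an ideal $I\supseteq\mfm_Y$. The natural candidate for $I$ is forced: if $L=U_F(I)=F\cap I$ with $\mfm_Y\subseteq I$, then since $F$ is transverse to $\mfm_Y$ (and hence to every ideal $I$ between $\mfm_Y$ and $\mcR$, because such $I$ also has finite codimension and by Lemma~\ref{lemma_polynomial_submodule_containment} contains a polynomial ideal — but more directly, transversality of $F$ to $\mfm_Y$ forces $F+\mfm_Y=\mcR$, so $F+I=\mcR$ too), we get $I=I\cap(F+\mfm_Y)=(I\cap F)+\mfm_Y=L+\mfm_Y$ using the modular law together with $\mfm_Y\subseteq I$. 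So the only possible $I$ is $\mfm_Y+L$, and the real content is: for which $L$ is $\mfm_Y+L$ an ideal of codimension $d$ with $(\mfm_Y+L)\cap F=L$?

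First I would establish the easy direction. Suppose $L=U_F(I)$ with $\mfm_Y\subseteq I\in\mcM_d$. Then $F\cap\mfm_Y\subseteq F\cap I=L$ gives the first bullet. For the second, pick any $f\in F$ whose class lies in $L$'s complement — more precisely, for $p\in FL$ (I read $FL$ as the set $\{f\cdot g : f\in F,\ g\in L\}$, or perhaps the product ideal; in any case $FL\subseteq I$ since $L\subseteq I$ and $I$ is an ideal) we have $A(Y,p)\equiv p\pmod{\mfm_Y}$, so $A(Y,p)-p\in\mfm_Y\subseteq I$; since $p\in I$ we get $A(Y,p)\in I$, and as $A(Y,p)\in F$ this means $A(Y,p)\in F\cap I=L$. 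Hence $A(Y,FL)\subseteq L$.

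Conversely, assume $L$ satisfies both bullets; set $I=\mfm_Y+L$. I must check three things: (i) $I$ is an ideal; (ii) $I$ has codimension $d$; (iii) $F\cap I=L$. For (i) the only issue is closure under multiplication by $\mcR$: given $g\in\mcR$ and $\ell\in L$, write $g\ell$; modulo $\mfm_Y$ it equals $A(Y,g\ell)$, and here the second bullet must be invoked — $g\ell\in FL$ (or in the ideal it generates; this is exactly where the precise reading of $FL$ matters), so $A(Y,g\ell)\in L$, whence $g\ell=A(Y,g\ell)+(g\ell-A(Y,g\ell))\in L+\mfm_Y=I$. For (ii): the composite $L\hookrightarrow F\twoheadrightarrow F/(F\cap\mfm_Y)\cong\mcR/\mfm_Y$ (the iso by transversality of $F$ to $\mfm_Y$) is surjective because $L\supseteq F\cap\mfm_Y$ and $L$ has codimension $d=\dim\mcR/\mfm_Y$ in $F$; so $L+\mfm_Y=\mcR$... wait, that would say $I=\mcR$. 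The correct count: $\dim\mcR/I=\dim(\mcR/\mfm_Y)/(\text{image of }L)=\dim(F/(F\cap\mfm_Y))/(L/(F\cap\mfm_Y))=\dim F/L=d$, using that $L\supseteq F\cap\mfm_Y$. For (iii): $L\subseteq F\cap I$ is clear; for the reverse, if $f\in F\cap I$ then $f=m+\ell$ with $m\in\mfm_Y$, $\ell\in L$, so $f-\ell=m\in\mfm_Y\cap F\subseteq L$, hence $f\in L$.

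The main obstacle, and the point I would be most careful about, is pinning down the precise meaning of the expression $A(Y,FL)$ (equivalently $A(\{x_i^{k_i}\},FL_i)$ in the proposition) and verifying that the second bullet as literally stated is genuinely equivalent to $\mfm_Y+L$ being an ideal. If $FL$ denotes the $\mcR$-submodule generated by products $fg$ with $f\in F$, $g\in L$ — or even all of $\mcR\cdot L$ — then closure under multiplication follows as above; if it literally means only the set of such products $fg$, one still needs that this set spans enough, which it does since $1\in\mcR$ can be approximated... no — rather, one should note $L\subseteq FL$ is false in general, so the argument must use that $\mcR L\subseteq \mfm_Y+FL$ modulo lower-order terms, or appeal to a generating set. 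I would resolve this by taking $FL$ to be the ideal $\mcR L$ generated by $L$ (equivalently $\mcR\cdot F\cdot L$ since $\mcR F=\mcR$ as $1\in\mcR$... but $1\notin F$ typically), checking that since $L\subseteq I$ forces $\mcR L\subseteq I$ in one direction, and in the converse direction the second bullet applied to generators of $\mcR L$ over $L$ (finitely many, by finite generation of $\mcR$ and finite-dimensionality of $L$) suffices; linearity of $A$ in the second variable then propagates the condition to all of $\mcR L$. This bookkeeping about which finite set of products to test is the only genuinely delicate part; everything else is the modular law plus dimension counting.
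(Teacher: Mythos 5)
Your argument is essentially the paper's: the forced candidate $I=\mfm_Y+L$, the easy direction via $A(Y,f)\equiv f$ modulo $\mfm_Y$, and the converse by checking that $\mfm_Y+L$ is an ideal of codimension $d$ meeting $F$ in $L$; your modular-law derivation of $I=\mfm_Y+L$ and the count $\dim\mcR/I=\dim F/L=d$ are fine (the latter is the paper's displayed codimension identity). The one place you hesitate --- what $FL$ means and whether the second bullet controls multiplication by an arbitrary $g\in\mcR$ --- is settled in the paper by exactly the inclusion you mention only in passing: reading $FL$ as the (span of) products $fg$ with $f\in F$, $g\in L$, transversality gives $\mcR=\mfm_Y+F$ on the nose, hence
\[\mcR(\mfm_Y+L)=(\mfm_Y+F)(\mfm_Y+L)\subseteq\mfm_Y+FL,\]
and then the second bullet together with $f-A(Y,f)\in\mfm_Y$ gives $FL\subseteq\mfm_Y+L=I$; there is no ``modulo lower-order terms'' issue and no generating-set bookkeeping. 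Your proposed fallback via ``finite generation of $\mcR$'' should be dropped: $C^\infty(M,\bbR)$ is not finitely generated as an $\bbR$-algebra, and nothing of the sort is needed, since $A(Y,-)$ is linear in the function variable and the condition therefore only has to be checked on the spanning products $fg$. With that one-line repair your proof is complete and coincides with the paper's.
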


\begin{proof}
First we observe that for any $L\in G_d(F)$ we have
\begin{equation} \label{int_codimensions}
\codim_\mcR(\mfm_Y+L)=\codim_F(F\cap\mfm_Y+L)\leq d.
\end{equation}
Now if $L=U_F(I)=F\cap I$ then $\mfm_Y+L\subseteq I$ forcing $\codim_\mcR(\mfm_Y+L)=d$ and by (\ref{int_codimensions}), $F\cap\mfm_Y\subseteq L$. Second condition follows by decomposing each $f\in FL$ as
\[f=(f-A(Y,f))+A(Y,f)\in\mfm_Y+F\]
Since $f$ lies in $I$ and $\mfm_Y\subseteq I$ this implies $A(Y,f)\in F\cap I=L$.

If on the other hand the two conditions are satisfied we set $I=\mfm_Y+L$. By (\ref{int_codimensions}) the first condition implies $\codim_\mcR I=d$. It remains to show that $I$ is an ideal. Since
\[\mcR I=(\mfm_Y+F)(\mfm_Y+L)\subseteq \mfm_Y+FL\]
we are left to show that $FL\subseteq I=\mfm_Y+L$ which is guaranteed by the second condition.
\end{proof}

\begin{proof}[Proof of Theorem~\ref{int_independence_of_topology}, the properness of $\wspec$]
We are to show that $\bar U_F\mcM_d$ is closed in $\SP_d\times G_d(F)$. Therefore let $(Y_p,L_p)$ be a sequence in $\bar U_F\mcM_d$ converging to $(Y,L)\in\SP_d\times G_d(F)$. Thus there are ideals $I_p$ with $\wspec(I_p)=Y_p$ and $L_p=F\cap I_p$. We will now construct an ideal $I$ such that $L=F\cap I$. This is an easy application of Lemma~\ref{int_recognizing_the_image_of_UF} to $L$ and $Y$. The first condition is the content of Corollary~\ref{int_convergence_in_Sd}. To verify the second condition we choose for $f\in L$ a sequence $f_p\in L_p$ converging to $L$ and then
\[A(Y,gf)=\lim_p A(Y_p,gf_p)\]
lies in $L$ since $A(Y_p,gf_p)\in A(Y_p,PL_p)\subseteq L_p$ and $L_p$ converges to $L$.

The harder part is to show that indeed $\wspec(I)=Y=\{y_1^{k_1},\ldots,y_n^{k_n}\}$.
The idea is to use the primary decomposition. We choose some disjoint closed neighborhoods $C_i$ of $y_i$ and decompose
\[I_p=(I_p+\mfn_{C_1})\cap\cdots\cap(I_p+\mfn_{C_n})\]
By the convergence $\wspec(I_p)\rightarrow Y$ we easily see that
the codimension of each $I_p^i:=I_p+\mfn_{C_i}$ is $k_i$, at
least for all big enough $p$. Assuming that each sequence $F\cap
I_p^i$ converges in $G_{k_i}(F)$ to $F\cap I^i$ we certainly have
\begin{align*}
F\cap I & =\lim(F\cap I_p^1)\cap\cdots\cap(F\cap I_p^n)\subseteq \\
& \subseteq(F\cap I^1)\cap\cdots\cap(F\cap I^n)= F\cap(I^1\cap\cdots\cap I^n)
\end{align*}
and by $F\pitchfork\mcP_{d+1}$ and Proposition~\ref{int_injectivity_proposition} we conclude that $I\subseteq I^1\cap\cdots\cap I^n$. By the above and Lemma~\ref{int_recognizing_the_image_of_UF} we have $I^i=(\mfm_{y_i})^{k_i}+F\cap I^i$ and thus its spectrum consists just of $y_i$. Hence the $I^i$ are in general position and therefore the codimension of $I^1\cap\cdots\cap I^n$ is also $d$, $I=I^1\cap\cdots\cap I^n$ is the primary decomposition of $I$ proving the claim about its weighted spectrum.

The map $\mcM_d\times\mcR\ra\mcE_d$ of vector bundles can be defined alternatively as
\[\mcM_d\times\mcR\xlra{(\id_{\mcM_d},\wspec)\times\id_\mcR}\mcM_d\times\SP_d\times\mcR
\xlra{\id_{\mcM_d}\times A}\mcM_d\times F\longrightarrow\mcE_d\]
using the continuous interpolation map $A$ of Theorem~\ref{int_continuous_interpolation}. Using an inner product on $F$ one can easily find a section proving that it is indeed a quotient map.
\end{proof}

\begin{remark}
Let us denote $\mcM_d(M)=\mcM_d$. For an open subset $U\subseteq M$ we define $\mcM_d(U)\subseteq\mcM_d(M)$ by the pullback square
 \[\xymatrix{
  \mcM_d(U) \ar@{c->}[r] \ar[d] & \mcM_d(M) \ar[d]^{\wspec}
 \\
  \SP_d(U) \ar@{c->}[r] & \SP_d(M)
 }\]
In other words $\mcM_d(U)$ is the space of ideals in $\mcR$ whose spectra lie in $U$. Theorem~\ref{int_independence_of_topology} implies that the topology of $\mcM_d(U)$ is independent of $M$ and $\mcM_d(M)$ is a union of $\mcM_d(U)$ as $U$ varies over coordinate charts allowing one to build $\mcM_d(M)$ from its local versions $\mcM_d(U)$. On the other hand the glueing maps are not affine and so \cite{Glaeser} could not be used to glue $\mcM_d$ from the local versions.
\end{remark}

\section{The structure of $\mcM_d$}

We will now prove some structure theorems explaining what the spaces $\mcM_d$ look like. Concrete examples for low codimensions $d$ can be found in \cite{Thesis}. First we prove a useful proposition which enables recognizing whether maps into $\mcM_d$ are continuous, smooth or even immersions.

\begin{proposition} \label{int_smooth_maps_into_Md}
Let $N$ be a topological space, let $F\pitchfork\mcM_d$ and let
 \[\varphi:N\times\mcR\ra\bbR^d\]
be a not necessarily continuous mapping such that for each $x\in N$ the partial map $\varphi(x,-):\mcR\ra\bbR^d$ is surjective linear with kernel $I_x=\ker\varphi(x,-)$ an ideal in $\mcR$. Denote by $\psi$ the map $\psi:N\ra\mcM_d\subseteq\SP_d\times G_d(F)$ sending $x\in N$ to $I_x$ and by $\psi_2$ its second coordinate,~i.e.~$\psi_2(x)=F\cap I_x$.
\begin{enumerate}
\item[a)]
If for each $f\in\mcR$ the partial map $\varphi(-,f):N\ra\bbR^d$ is continuous then so is $\psi_2$.
\item[b)]
If $N$ is a smooth manifold and all $\varphi(-,f)$ are smooth then so is $\psi_2$.
\item[c)]
If in addition $F\pitchfork\mcM_{d+1}$ then $X\in T_xN$ lies in the kernel of the differential $(\psi_2)_*:TN\ra TG_d(F)$ if and only if
 \begin{equation} \label{int_condition_on_ideals}
  I_x\subseteq\{f\in\mcR\ |\ d(\varphi(-,f))(X)=0\}
 \end{equation}
\end{enumerate}
\end{proposition}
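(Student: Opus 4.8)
The plan is to use the coordinates on the Grassmannian $G_d(F)$ given by the Pl\"ucker-type description, or more directly by the fact that a subspace $L\subseteq F$ of codimension $d$ is locally the graph of a linear map. Since $\psi_2(x)=F\cap I_x$ and, by Proposition~\ref{int_injectivity_proposition} (which is available because $F\pitchfork\mcM_d$ suffices for the injectivity and, where needed in part c), $F\pitchfork\mcM_{d+1}$), the assignment $x\mapsto I_x$ is faithfully recorded by $F\cap I_x$, it is enough to understand $F\cap I_x$ explicitly. Concretely, choose an interpolation-type retraction: pick a linear complement, or better, fix $f_1,\dots,f_d\in F$ whose images span $F/(F\cap I_{x_0})$ at a base point $x_0$; then for $x$ near $x_0$ the composite $F\ra F/(F\cap I_x)\cong\bbR^d$ is, after using $\varphi(x,-)$ to identify $F/(F\cap I_x)$ with $\bbR^d$ via the $f_i$, given by a matrix that depends on $x$ only through the numbers $\varphi(x,f)$ for $f\in F$. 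This is the key observation: $F\cap I_x=\ker(\varphi(x,-)|_F)$ and the local chart coordinate of this subspace in $G_d(F)$ is a rational (indeed, near $x_0$, polynomial-after-inverting-a-nonvanishing-determinant) function of the finitely many real numbers $\varphi(x,f_1),\dots,\varphi(x,f_r)$ where $f_1,\dots,f_r$ is a basis of $F$.

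For part a), I would argue that continuity of $x\mapsto\varphi(x,f)$ for each fixed $f$ gives continuity of $x\mapsto(\varphi(x,f_1),\dots,\varphi(x,f_r))\in(\bbR^d)^r$, and then $\psi_2$ is the composition of this continuous map with the (everywhere-defined, since $F\pitchfork\mcM_d$) continuous map $(\bbR^d)^r\supseteq\{\text{rank-}d\text{ matrices}\}\ra G_d(F)$ sending a surjection $F\ra\bbR^d$ to its kernel. For part b), the same composition argument applies verbatim with ``continuous'' replaced by ``smooth'': the map $x\mapsto(\varphi(x,f_i))_i$ is smooth by hypothesis, and taking the kernel of a smoothly varying full-rank linear map is a smooth operation on the Grassmannian.

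For part c), I would compute $(\psi_2)_*(X)$ in the local chart. With $F\pitchfork\mcM_{d+1}$, Proposition~\ref{int_injectivity_proposition} gives $I_x\subseteq I_{x'}\iff F\cap I_x\subseteq F\cap I_{x'}$ for codimension-$\le d$ ideals, which is what lets us pass between infinitesimal conditions on $I_x$ and on $F\cap I_x$. Using the chart where $F\cap I_x$ is the graph of a linear map $A(x):\bbR^{r-d}\ra\bbR^d$ (with respect to a fixed decomposition $F\cong\bbR^{r-d}\oplus\bbR^d$ adapted at $x_0$), the derivative $(\psi_2)_*(X)$ vanishes iff $\derat{t}{0}A(x(t))=0$ along a curve with $\dot x(0)=X$, i.e.\ iff the first-order variation of the subspace $F\cap I_x$ is zero. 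Spelling this out via $\varphi$: $F\cap I_x=\{f\in F\mid\varphi(x,f)=0\}$, and differentiating the defining relation, the tangent space condition becomes precisely that $d(\varphi(-,f))(X)=0$ for all $f$ in the fiber, which by linearity over $F$ and then over $\mcR$ (using that $I_x=\ker\varphi(x,-)$ is an ideal, so $\mcR\cdot I_x\subseteq I_x$, and $F\pitchfork\mcM_d$ to reduce from $\mcR$ to $F$) is equivalent to the stated condition $I_x\subseteq\{f\mid d(\varphi(-,f))(X)=0\}$. The set on the right is checked to be an ideal (or at least to contain $I_x$ iff the $F$-version does, which is all we need) by the Leibniz rule: $d(\varphi(-,gf))(X)=\varphi(x,g)\cdot d(\varphi(-,f))(X)+d(\varphi(-,g))(X)\cdot\varphi(x,f)$, so if $f\in I_x$ then $\varphi(x,f)=0$ and the second term drops, giving $d(\varphi(-,gf))(X)=\varphi(x,g)d(\varphi(-,f))(X)$.

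The main obstacle I anticipate is part c): one must be careful that the two a priori different conditions --- vanishing of the variation of the \emph{subspace} $F\cap I_x$ in $G_d(F)$ versus the \emph{functional} condition \eqref{int_condition_on_ideals} on all of $\mcR$ --- really coincide. This requires (i) translating ``variation of the kernel of $\varphi(x,-)|_F$ is zero'' into ``$d(\varphi(-,f))(X)=0$ for $f\in F\cap I_x$'', which is the standard computation of the differential of the Gauss/kernel map on a Grassmannian and needs the fixed full-rank identification to make sense of it; and (ii) upgrading from $f\in F\cap I_x$ to $f\in I_x$, for which one uses $\mcR=F+I_x$ (transversality) together with the Leibniz computation above to see that $d(\varphi(-,-))(X)$ restricted to $I_x$ is $\mcR$-linear modulo $I_x$, hence determined by its values on a transversal $F$. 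Everything else is the routine observation that kernels of smoothly/continuously varying full-rank linear maps vary smoothly/continuously.
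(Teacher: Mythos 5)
Parts a) and b) of your argument are fine and are essentially the paper's: restricting $\varphi$ to $N\times F$ gives a continuously (resp.\ smoothly) varying surjection $F\ra\bbR^d$, and its kernel varies continuously (smoothly) in $G_d(F)$; the paper phrases this via the implicit function theorem in a graph chart, you via the kernel map on the full-rank locus, which is the same computation. (Minor point: Proposition~\ref{int_injectivity_proposition} requires $F\pitchfork\mcM_{d+1}$, not just $F\pitchfork\mcM_d$, but injectivity is not needed for a) or b) anyway.)

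In part c) there is a genuine gap, and it sits exactly where the hypothesis $F\pitchfork\mcM_{d+1}$ has to do real work. The chart computation correctly reduces $(\psi_2)_*(X)=0$ to ``$d(\varphi(-,v))(X)=0$ for all $v\in F\cap I_x$''; the problem is your upgrade from $F\cap I_x$ to $I_x$. You argue that $\lambda(f):=d(\varphi(-,f))(X)$ is $\mcR$-linear on $I_x$ modulo $I_x$ and is therefore ``determined by its values on a transversal $F$''. But $\mcR=F+I_x$ only says $F$ is a linear complement to $I_x$; an $\mcR$-linear map on $I_x$ is determined by its values on $F\cap I_x$ only if $F\cap I_x$ generates $I_x$ as an ideal, which is not a consequence of transversality alone and is itself a nontrivial claim needing the same kind of argument you are trying to avoid. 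Nor does Proposition~\ref{int_injectivity_proposition} help as cited: it compares two ideals of codimension at most $d$, whereas the relevant object here, $I_X=\{f\ |\ \varphi(x,f)=0,\ d(\varphi(-,f))(X)=0\}$, can have codimension up to $2d$. The paper closes this gap differently: it shows $I_X$ is an ideal (using that the family of multiplications $\mu(y)$ on $\bbR^d\cong\mcR/I_y$ is smooth; note your Leibniz formula is missing the term $d\mu(X)(\varphi(x,f)\otimes\varphi(x,g))$, which happens to vanish in the case you use, but the correct derivation needs $\mu$ and its differentiability), observes $I_X$ has finite codimension since $I_x/I_X$ embeds in $\bbR^d$, and then, if $I_X\subsetneqq I_x$, takes a maximal intermediate ideal $J$ with $I_X\subseteq J\subsetneqq I_x$; such $J$ lies in $\mcM_{d+1}$, so $F\pitchfork J$ forces $F\cap I_x\not\subseteq J\supseteq I_X$, producing $v\in F\cap I_x$ with $d(\varphi(-,v))(X)\neq0$. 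Your write-up needs this (or an honest proof that $F\cap I_x$ generates $I_x$ under $F\pitchfork\mcM_{d+1}$) to make the reverse implication in \eqref{int_condition_on_ideals} valid.
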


\begin{remark}
It is not hard to prove continuity of the first component $\psi_1(x)=\wspec(I_x)$ in a). It seems regretable that we have not been able to prove smoothness of $\psi_1$ in~b). In our applications however this will be automatic.

For $M$ one-dimensional $\psi_1$ is smooth: locally in a neighbourhood of $x_0\in M=\bbR$ and $\wspec(I_{x_0})=\{y^k\}$ one can choose a (unique) normed polynomial generator of $I_x$ of degree exactly $k$ in a smooth manner. Its coefficients constitute generators (over $C^\infty(\bbR,\bbR)$ acting by composition from the left) for the ring of smooth functions on $\SP_k$ defined in a neighbourhood of $\{y^k\}$ and pull back to smooth functions on $N$.
\end{remark}

\begin{proof}
We skip the proof of a) as it will become clear from the proof of b).

Let $x\in N$ and denote $L=\psi_2(x)=F\cap I_x$. We
choose a complementary subspace $L^\perp$ to $L$ inside $F$ and get a
chart on $G_d(F)$
 \[\Hom(L,L^\perp)\lra G_d(F)\]
given by sending a map $\alpha$ to its graph inside $L\times
L^\perp\cong F$. Let us consider the restriction $\varphi_F$ of
$\varphi$ to $N\times F$ and write it in the form
 \[\varphi_F:N\times L\times L^\perp\lra\bbR^d\]
Observe that the differential $d\varphi_F|_{L^\perp}$ is an isomorphism of
$L^\perp$ on $\bbR^d$ near $\{x\}\times L\times L^\perp$. In particular there
is a unique solution $\alpha(y)(v)$ to the equation
 \[\varphi_F(y,v,\alpha(y)(v))=0\]
and it is automatically smooth. Clearly $\alpha(y):L\ra L^\perp$ is the
expression of $\psi_2(y)$ in the above coordinate chart (with
$\alpha(x)=0$).

Starting the proof of c) we have a formula for the derivative
 \[d\alpha(X)(v)=(d(\varphi_F(x,v,-)))^{-1}(d(\varphi_F(-,v,0)))(X)\]
In particular $X\in\ker(\psi_2)_*$ if and only if for each $v\in L$ it
lies in the kernel of $d(\varphi_F(-,v,0))$. To explain this
condition we introduce
 \[I_X:=\{f\in\mcR\ |\ \varphi(x,f)=0,\ d(\varphi(-,f))(X)=0\}\]
As the name suggests it is an ideal and to prove this one observes
that for each $y\in N$ we have a multiplication on $\bbR^d$
arising from the identification $\mcR/I_y\cong\bbR^d$. This family
is smooth in the sense of the map
 \[\mu:N\lra\Hom(\bbR^d\otimes\bbR^d,\bbR^d)\]
being smooth. If we temporarily denote
 \[f(x):=\varphi(x,f)\quad\textrm{and}\quad
 df(X):=d(\varphi(-,f))(X)\]
then for $f,g\in\mcR$ we get
 \[d(fg)(X)=\mu(x)\bigl(f(x)\otimes dg(X)+df(X)\otimes g(x)\bigr)+d\mu(X)(f(x)\otimes g(x))\]
Therefore if one of $f$, $g$ lies in $I_X$ then so does their
product.

The condition (\ref{int_condition_on_ideals}) from the statement
is then equivalent to $I_X=I_x$. Assuming that this equality
holds, every $v\in L\subseteq I_x$ lies in $I_X$ implying that
$d(\varphi_F(-,v,0))(X)=0$. Therefore in this case $(\psi_2)_*(X)=0$.
If on the other hand $I_X\subsetneqq I_x$ then there is an ideal
$J$ which is maximal among those for which $I_X\subseteq
J\subsetneqq I_x$. Necessarily $J\in\mcM_{d+1}$ and by our transversality
assumption $F\cap I_X\subseteq F\cap J\subsetneqq F\cap I_x=L$ so that there is $v\in
L$ for which $v\not\in I_X$ implying that
$d(\varphi_F(-,v,0))(X)\neq0$ and $\psi_*(X)\neq0$.
\end{proof}

The space $\mcM_d$ contains as a subspace the configuration
space
 \[M^{[d]}=M^{(d)}/\Sigma_d\subseteq\SP_d\]
We will show now that it is in general an embedded submanifold.
An interesting question is whether $\mcM_d$ is the closure of $M^{(d)}$.

\begin{proposition} \label{int_configuration_space_inclusion}
Let $F$ be a finite dimensional linear subspace of $\mcR$ such
that $F\pitchfork\mcM_{d+1}$. Then the inclusion
\begin{align*}
\psi:M^{[d]}\subseteq\SP_d & \lra\mcM_d\subseteq\SP_d\times G_d(F) \\
Y & \longmapsto\mfm_Y
\end{align*}
is a smooth embedding. In fact $\psi_2$ already is.
\end{proposition}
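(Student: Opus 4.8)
The plan is to deduce everything from Proposition~\ref{int_smooth_maps_into_Md} applied with $N=M^{[d]}=M^{(d)}/\Sigma_d$, together with the elementary observation that $\iota\colon M^{[d]}\hookrightarrow\SP_d$ is an open embedding which is smooth for the smooth structure on $\SP_d$ fixed above (a function on $\SP_d$ being smooth precisely when its pullback to $M^d$ is, the free locus $M^{[d]}$ inherits exactly its usual manifold structure). Near a point $Y_0=\{y_1^0,\dots,y_d^0\}\in M^{[d]}$ I would choose pairwise disjoint charts $U_i\ni y_i^0$; for $Y$ in the neighbourhood $\prod U_i/\sim$ there is a unique labelling $Y=\{y_1,\dots,y_d\}$ with $y_i\in U_i$, and I set $\varphi(Y,f)=(f(y_1),\dots,f(y_d))$. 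For fixed $Y$ this is linear and surjective (distinct points are separated by bump functions) with kernel precisely $\mfm_Y$, and for fixed $f$ it is smooth in $Y$; so part~b) of Proposition~\ref{int_smooth_maps_into_Md} yields that $\psi_2$ is smooth near $Y_0$, and since $\psi_2$ depends only on the ideal $\mfm_Y$ and not on the local labelling, these pieces patch to a globally smooth $\psi_2$. Consequently $\psi=(\iota,\psi_2)$ is already a smooth embedding, being smooth and, because its first component $\iota$ is an open embedding, both an immersion and a topological embedding. The substance of the proposition is the sharper claim that $\psi_2$ alone is a smooth embedding, for which I must verify that $\psi_2$ is injective, an immersion, and a homeomorphism onto its image.

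Injectivity is immediate from Proposition~\ref{int_injectivity_proposition}: if $F\cap\mfm_Y=F\cap\mfm_{Y'}$ with $Y,Y'\in M^{[d]}$, then (both ideals lie in $\mcM_d$ and $F\pitchfork\mcM_{d+1}$) $\mfm_Y=\mfm_{Y'}$, whence $Y=\wspec(\mfm_Y)=\wspec(\mfm_{Y'})=Y'$. For the immersion property I would invoke part~c) of Proposition~\ref{int_smooth_maps_into_Md}: in the local product structure a tangent vector at $Y_0$ is a tuple $X=(X_1,\dots,X_d)$ with $X_i\in T_{y_i^0}M$, and $d(\varphi(-,f))(X)=(df_{y_1^0}(X_1),\dots,df_{y_d^0}(X_d))$. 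If $X\neq 0$, say $X_1\neq 0$, I pick a coordinate function $g$ centred at $y_1^0$ with $dg_{y_1^0}(X_1)\neq 0$, multiply it by a bump function supported in $U_1$ and equal to $1$ near $y_1^0$, and obtain $f\in\mfm_{Y_0}$ with $d(\varphi(-,f))(X)\neq 0$. Thus $\mfm_{Y_0}\not\subseteq\{f\in\mcR\ |\ d(\varphi(-,f))(X)=0\}$, so by~(\ref{int_condition_on_ideals}) $X\notin\ker(\psi_2)_*$ and $\psi_2$ is an immersion.

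The remaining, and most delicate, point is that $\psi_2$ is a homeomorphism onto its image, i.e.\ that $\psi_2(Y_p)\to\psi_2(Y)$ in $G_d(F)$ forces $Y_p\to Y$. After passing to a subsequence I would distinguish the possible limiting behaviours of $Y_p$ in $\SP_d$. If $Y_p$ converges to a point of $M^{[d]}$, continuity and injectivity of $\psi_2$ identify it with $Y$. If $Y_p$ converges to some $Y'\in\SP_d\setminus M^{[d]}$ (so a point of $Y'$ has weight $\geq 2$), then Corollary~\ref{int_convergence_in_Sd} gives $F\cap\mfm_{Y'}\subseteq\lim\psi_2(Y_p)=\psi_2(Y)$, which is impossible when $\dim M\geq 2$ because then $\codim_F(F\cap\mfm_{Y'})=\dim\mcR/\mfm_{Y'}>d=\codim_F\psi_2(Y)$; when $\dim M=1$ one has $\mcM_d\cong\SP_d$ and $Y\mapsto F\cap\mfm_Y$ extends continuously and injectively to all of $\SP_d$, ruling this case out directly. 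Finally, if $Y_p$ leaves every compact subset of $\SP_d$ (possible only for non-compact $M$), one reduces to the previous cases by working over coordinate neighbourhoods in $\SP_d$, using that the construction of $\mcM_d$ and its topology is local over $\SP_d$ (the Remark following Theorem~\ref{int_independence_of_topology}) together with the properness of $\wspec$. This shows $\psi_2$, hence also $\psi$, is a smooth embedding.

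I expect the escape-to-infinity sub-case of the last step to be the main obstacle, since it cannot be handled by properness of $\wspec$ alone and genuinely requires localising the whole construction over $\SP_d$; by contrast the boundary-accumulation case is the conceptual heart and is dispatched cleanly by the codimension count, while smoothness, injectivity and the immersion property are routine consequences of Proposition~\ref{int_smooth_maps_into_Md} and the structure theory of Section~1.
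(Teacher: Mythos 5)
Your treatment of smoothness, injectivity and the immersion property coincides with the paper's: the same local map $\varphi(Y,f)=(f(y_1),\ldots,f(y_d))$ is fed into Proposition~\ref{int_smooth_maps_into_Md}, and parts b) and c) give smoothness of $\psi_2$ and triviality of $\ker(\psi_2)_*$. The paper then settles the embedding question in one sentence: the map has a smooth inverse (for $\psi$ this is just the projection of $\SP_d\times G_d(F)$ back to $\SP_d$, restricted to the image), so an injective immersion with such an inverse is automatically an embedding, and no analysis of limiting configurations is needed.

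Your own route to the homeomorphism-onto-image statement has a genuine error at what you call its conceptual heart. From $Y_p\to Y'\in\SP_d\setminus M^{[d]}$ and $\psi_2(Y_p)\to\psi_2(Y)$ you deduce $F\cap\mfm_{Y'}\subseteq\psi_2(Y)$ and declare this impossible because $\codim_F(F\cap\mfm_{Y'})>d=\codim_F\,\psi_2(Y)$. But that inequality makes $F\cap\mfm_{Y'}$ a \emph{smaller} subspace than $\psi_2(Y)$, and a smaller subspace is perfectly free to sit inside a larger one; a contradiction would need the opposite inequality. (Moreover, both Corollary~\ref{int_convergence_in_Sd} and the identity $\codim_F(F\cap\mfm_{Y'})=\dim\mcR/\mfm_{Y'}$ presuppose $F\pitchfork\mfm_{Y'}$, i.e.\ $F\pitchfork\mcP_d$, which does not follow from the stated hypothesis $F\pitchfork\mcM_{d+1}$: once $\dim M\geq2$ and some weight is $\geq2$, the ideal $\mfm_{Y'}$ has codimension larger than $d+1$.) The correct way to exploit the convergence is via Theorem~\ref{int_independence_of_topology}: $\bar U_F\mcM_d$ is closed, so the limit $(Y',\psi_2(Y))$ equals $\bar U_F(I)$ for some $I\in\mcM_d$ with $\wspec(I)=Y'$ and $F\cap I=F\cap\mfm_Y$, whence $I=\mfm_Y$ by Proposition~\ref{int_injectivity_proposition}, contradicting $\wspec(\mfm_Y)=Y\neq Y'$. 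Finally, the one-dimensional aside ($\mcM_d\cong\SP_d$) is asserted rather than proved, and the escape-to-infinity case for non-compact $M$ is only sketched; as it stands, the embedding part of your argument is not a proof.
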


\begin{proof}
As the map in question has a smooth inverse we only need to
show that it is an immersion. First we express $\psi$ locally via
a map $\varphi:M^{[d]}\times\mcR\ra\bbR^d$ as in Proposition
\ref{int_smooth_maps_into_Md} and compute the kernel of $(\psi_2)_*$
using the same proposition. Therefore let $(x_1,\ldots,x_d)\in M^{(d)}$
and identify a neighbourhood of $[(x_1,\ldots,x_d)]\in M^{[d]}$
with a product $U_1\times\cdots\times U_d$ of disjoint
neighbourhoods $U_i$ of $x_i$. Then we can define
$\varphi:U_1\times\cdots\times U_d\times\mcR\ra\bbR^d$ by
 \[(z_1,\ldots,z_d,f)\mapsto(f(z_1),\ldots,f(z_d))\]
Clearly all the assumptions of Proposition \ref{int_smooth_maps_into_Md}
are satisfied and so $\psi_2$ is a smooth map as is $\psi_1=\id$. Also for
 \[(X_1,\ldots,X_d)\in T_{x_1}U_1\times\cdots\times T_{x_d}U_d\]
we have $d(\varphi(-,f))(X_1,\ldots,X_d)=(df(X_1),\ldots,df(X_d))$
and this can be zero on $\mfm_{\{x_1,\ldots,x_d\}}$ only if
$X_1=\cdots=X_d=0$.
\end{proof}

We will now generalize the previous proposition. Namely we describe certain subsets of $\mcM_d(M)$ of ideals of a ``fixed type''. They are injectively immersed submanifolds. Also every ideal has some (unique) type and so $\mcM_d$ is in fact a disjoint union (over all possible types) of these submanifolds.

We take the following construction from section 35 of \cite{KMS}.
A \emph{Weil algebra} is a finite dimensional associative,
commutative algebra $A$ over $\bbR$ with a unit such that
$A=\bbR\oplus N$ where $N$ is the ideal of nilpotent elements.
Equivalently it could be described as a quotient algebra of
$J^r_0(\bbR^m,\bbR)$ for some $r$ and $m$. Therefore we can get
all ideals of $\mcR$ with a spectrum consisting of a single point
as kernels of surjective algebra homomorphisms $\mcR\ra A$ for
some Weil algebra $A$ (namely $A$ is the quotient of $\mcR$ by
that ideal). We give the set of all such homomorphisms (surjective
or not) a smooth structure in such a way that the map sending such
a homomorphism to the spectrum of its kernel is a bundle
projection. This bundle is called the \emph{Weil bundle}
associated to $A$.

We first give a construction of this bundle and then show that its
points can be indeed identified with homomorphism $\mcR\ra A$. We
start with the restriction
 \begin{equation} \label{int_jet_bundle}
  J^r_{0,\mathrm{diff}}(\bbR^m,M)\ra M
 \end{equation}
of the jet bundle $J^r(\bbR^m,M)\ra M$ to the subspace of all
invertible jets with source~$0$. Setting
$G^r_m:=J^r_{0,\mathrm{diff}}(\bbR^m,\bbR^m)_0$, the Lie group of
all invertible jets with source and target~$0$, we see that
(\ref{int_jet_bundle}) is a principal $G^r_m$-bundle and so we can
define
 \begin{equation} \label{int_definition_of_TAM}
  T_AM:=J^r_{0,\mathrm{diff}}(\bbR^m,M)\times_{G^r_m}\Hom_{\mathrm{alg}}(J^r_0(\bbR^m,\bbR),A)
 \end{equation}
This clearly expresses $T_AM$ as a smooth bundle over $M$ with
fibre
 \[\Hom_{\mathrm{alg}}(J^r_0(\bbR^m,\bbR),A)\cong N^m\]
Moreover we have a bijection defined in terms of
(\ref{int_definition_of_TAM}) by the formula
\begin{align*}
T_AM & \xlra{\cong}\Hom_{\mathrm{alg}}(\mcR,A) \\
[j^r_xg,\varphi] & \longmapsto\left(\mcR\xra{j^r_x} J^r_x(M,\bbR)\xra{g^*}J^r_0(\bbR^m,\bbR)\xra{\varphi} A\right)
\end{align*}
It is easily seen to be a bijective correspondence (that a kernel
of any $\mcR\ra A$ has a spectrum consisting of only a single
point follows from the fact that in $A$, the only idempotents are
$0$ and $1$). We have a subbundle
 \[
  \check{T}_AM:=J^r_{0,\mathrm{diff}}(\bbR^m,M)\times_{G^r_m}\SurHom_{\mathrm{alg}}(J^r_0(\bbR^m,\bbR),A)
 \]
which then corresponds to surjective algebra homomorphisms
$\mcR\ra A$.

One says that an ideal $I$ is of type $A$ if $\mcR/I\cong A$. An
ideal $I$ of type $A$ can then be identified with a class of
surjective homomorphisms $\mcR\ra A$, namely with the class of all
those homomorphisms that have kernel $I$. In this way we get a
space $J^AM$ of all ideals of type $A$ as a certain quotient of
$\check{T}_AM$. A crucial observation in \cite{Alonso} is that the
action of $G^r_m$ on
$\SurHom_{\mathrm{alg}}(J^r_0(\bbR^m,\bbR),A)$ is transitive and
so, after choosing some
$\alpha_0\in\SurHom_{\mathrm{alg}}(J^r_0(\bbR^m,\bbR),A)$, one can
identify it with the quotient of $G^r_m$ by the stabilizer of
$\alpha_0$. In the same terminology an ideal in
$J^r_0(\bbR^m,\bbR)$ of type $A$ is a class of $G^r_m$ modulo the
stabilizer of $\ker\alpha_0$. Therefore the space of ideals of
type $A$ can be identified with the smooth bundle
 \[
  J^AM\cong J^r_{0,\mathrm{diff}}(\bbR^m,M)/\mathrm{St}(\ker\alpha_0)\lra M
 \]
and clearly the smooth structure does not depend on the choice of
$\alpha_0$.

\begin{proposition} \label{int_Weil_bundle_inclusion}
Let $F\pitchfork\mcM_{d+1}$ be a finite dimensional linear
subspace of $\mcR$ and let $A$ be a $d$-dimensional Weil algebra.
Then the inclusion
 \[\iota:J^AM\subseteq \mcM_d\subseteq\SP_d\times G_d(F)\]
is an injective immersion.
\end{proposition}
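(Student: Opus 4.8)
The plan is to prove this in close parallel with Proposition~\ref{int_configuration_space_inclusion}: realize $\iota$ locally through a map $\varphi\colon N\times\mcR\to\bbR^d$ of the type covered by Proposition~\ref{int_smooth_maps_into_Md}, deduce smoothness from parts~a)--b), and then compute $\ker(\psi_2)_*$ via part~c), identifying it with the vertical tangent space of the projection $\check T_AM\to J^AM$ so that it must vanish.

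First I would fix a presentation of $A$ as a quotient of $J^r_0(\bbR^m,\bbR)$ together with a surjection $\alpha_0\in\SurHom_{\mathrm{alg}}(J^r_0(\bbR^m,\bbR),A)$, so that $J^AM\cong J^r_{0,\mathrm{diff}}(\bbR^m,M)/\mathrm{St}(\ker\alpha_0)$ as in the discussion above; then $J^r_{0,\mathrm{diff}}(\bbR^m,M)\to J^AM$ is a principal $\mathrm{St}(\ker\alpha_0)$-bundle and so admits local sections. Over a small open $N\subseteq J^AM$ choose such a section, $\tilde s(x)=j^r_0\tilde g_x$, and set
\[\Phi_x:=\bigl(\mcR\xra{j^r}J^r_{y(x)}(M,\bbR)\xra{(\tilde g_x)^*}J^r_0(\bbR^m,\bbR)\xra{\alpha_0}A\bigr),\]
where $y(x)\in M$ is the image of $x$ under $J^AM\to M$. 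Each $\Phi_x$ is a surjective algebra homomorphism with $\ker\Phi_x=I_x$ (the ideal of type $A$ represented by $x$), and for each fixed $f$ the assignment $x\mapsto\Phi_x(f)$ is smooth, since $j^r_{y(x)}f$ depends smoothly on $x$, the jet precomposition $(\tilde g_x)^*$ depends polynomially on $\tilde s(x)$, and $\alpha_0$ is linear. After a linear isomorphism $A\cong\bbR^d$ the map $\varphi(x,f):=\Phi_x(f)$ meets all the hypotheses of Proposition~\ref{int_smooth_maps_into_Md} (the hypothesis $F\pitchfork\mcM_{d+1}$ gives in particular $F\pitchfork\mcM_d$, since every codimension-$d$ ideal contains one of codimension $d+1$). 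Hence $\psi_2$ is smooth by part~b); and $\psi_1$ is the composite of the bundle projection $J^AM\to M$ with the ``full diagonal'' $M\to\SP_d$, $y\mapsto\{y^d\}$, which is smooth, so $\iota=(\psi_1,\psi_2)$ is smooth. Injectivity is immediate, as $\iota$ factors through $J^AM\hookrightarrow\mcM_d\xra{\bar U_F}\SP_d\times G_d(F)$, the first map being injective (distinct ideals) and the second injective on $\mcM_d$ by Proposition~\ref{int_injectivity_proposition}.

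There remains the immersion claim, equivalently (as $N$ ranges over a cover of $J^AM$) that each such $\psi_2$ is an immersion, which I would get from part~c) of Proposition~\ref{int_smooth_maps_into_Md}. Write $s\colon N\to\check T_AM$ for the section $x\mapsto\Phi_x$; then for $x\in N$ and $X\in T_xN$ one has $d(\varphi(-,f))(X)=\eta_X(f)$, where $\eta_X:=(ds)(X)$ is a $\Phi_x$-derivation $\mcR\to A$, being a tangent vector of the Weil bundle $\check T_AM$ at $\Phi_x$. By part~c), $X\in\ker(\psi_2)_*$ precisely when $\eta_X$ vanishes on $I_x=\ker\Phi_x$; since $\Phi_x$ is surjective this means $\eta_X=\delta\circ\Phi_x$ for some linear $\delta\colon A\to A$, which the Leibniz rule for $\eta_X$ forces to be a derivation of $A$. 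But then $t\mapsto\exp(t\delta)\circ\Phi_x$ is a curve in the fibre of $p\colon\check T_AM\to J^AM$ over $I_x$ with velocity $\delta\circ\Phi_x=\eta_X$, so $\eta_X\in\ker p_*$; applying $p_*$ and using $p\circ s=\mathrm{incl}_N$ gives $X=p_*(\eta_X)=0$. Hence $\ker(\psi_2)_*=0$, and $\iota$ is an injective immersion.

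I expect the only delicate part to be the Weil-bundle input used at the end: that the tangent space of $\check T_AM$ at $\Phi_x$ is exactly the space of $\Phi_x$-derivations $\mcR\to A$ (standard, since these factor through $J^r_{y(x)}(M,\bbR)$ and recover the fibre $N^m$), that $p\colon\check T_AM\to J^AM$ is a submersion with vertical space $\{\delta\circ\Phi_x\mid\delta\in\Der(A)\}=\Lie(\Aut(A))\cdot\Phi_x$ at $\Phi_x$ (using that two surjections $\mcR\to A$ with the same kernel differ by a unique automorphism of $A$, so $\Aut(A)$ acts freely with orbit the whole fibre), and that the identification $J^AM\cong J^r_{0,\mathrm{diff}}(\bbR^m,M)/\mathrm{St}(\ker\alpha_0)$ is compatible with $p$, i.e.\ $p(\Phi_x)=x$. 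Once these are granted, the computation of $\ker(\psi_2)_*$ is forced and the rest is routine.
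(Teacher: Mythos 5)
Your proposal is correct, but it takes a genuinely different route at the decisive step. Both arguments are organized around Proposition~\ref{int_smooth_maps_into_Md}; the paper, however, applies it \emph{upstairs} on the total space $J^r_{0,\mathrm{diff}}(\bbR^m,M)$ with $\varphi(j^r_0g,f)=\alpha_0(j^r_0(fg))$, and must then show that $\ker(\psi_2)_*$ is exactly the vertical bundle of the quotient by $\mathrm{St}(\ker\alpha_0)$. This forces a two-case analysis: tangent vectors with nonzero projection to $M$ are excluded by an explicit jet computation (straightening the vector field to $\partial_{x_1}$ and producing $f$ with $j^r_0(fgh)\in K$ but $j^r_0(\partial_{x_1}(fgh))\notin K$), while fibre directions are handled via the $G^r_m$-action and the Lie algebra of the stabilizer. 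You instead apply the proposition \emph{downstairs} on $J^AM$ through a local section $s$ of $p:\check T_AM\to J^AM$, and replace the whole case analysis by the soft observation that a $\Phi_x$-derivation killing $I_x$ has the form $\delta\circ\Phi_x$ with $\delta\in\Der(A)$, hence is the velocity of the curve $\exp(t\delta)\circ\Phi_x$ inside the fibre of $p$, so that $p_*\circ ds=\id$ forces $X=0$. What each approach buys: yours is shorter and conceptual and avoids the paper's explicit ``claim'' entirely, but it leans on more of the smooth structure of the Weil bundle --- that each $\ev_f$ is smooth and that evaluations separate tangent vectors (i.e.\ $T_{\Phi_x}\check T_AM\cong\Der_{\Phi_x}(\mcR,A)$), that $p$ is smooth with $p\circ s=\mathrm{incl}$, and that fibres of $p$ are $\Aut(A)$-orbits; these are indeed standard (evaluation at coordinate functions already yields local coordinates on $\check T_AM$, and the transitivity of $G^r_m$ on $\SurHom_{\mathrm{alg}}(J^r_0(\bbR^m,\bbR),A)$ from \cite{Alonso}, which the paper uses anyway, gives both the smoothness of $p$ and the orbit description), so nothing essential is missing, but be aware that the paper's explicit computation is precisely what gets absorbed into these structural facts. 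Your auxiliary remarks also check out: $F\pitchfork\mcM_{d+1}$ does imply $F\pitchfork\mcM_d$ (intersect a given $I$ with $\mfm_z$ for $z\notin\spec(I)$), the Leibniz rule for $\eta_X$ needs only multiplicativity of each $\Phi_x$ together with smoothness of $\varphi(-,f)$, and on $J^AM$ the map $\wspec$ is $y\mapsto\{y^d\}$ composed with the bundle projection, so $\psi_1$ is smooth --- slightly more than the paper states.
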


\begin{proof}
To be on the safe side we prove that in fact the second component $\iota_2$ of $\iota$ is an injective immersion. This has the advantage of staying solely in the world of smooth manifolds where the meaning of an ``immersion'' is clear.

This is another application of Proposition
\ref{int_smooth_maps_into_Md}. Consider the map
\begin{align*}
\varphi:J^r_{0,\mathrm{diff}}(\bbR^m,M)\times\mcR & \lra A \\
(j^r_x(g),f) & \longmapsto\alpha_0(j^r_x(fg))
\end{align*}
Clearly this map is smooth in the first and surjective linear in the second variable and hence in the sense of Proposition~\ref{int_smooth_maps_into_Md} it defines
 \[\psi:J^r_{0,\mathrm{diff}}(\bbR^m,M)\ra\mcM_d\subseteq\SP_d\times G_d(F)\]
which is also smooth (the first component being the bundle projection for $J^AM$). As we have a commutative diagram
 \[\xymatrix{
  J^r_{0,\mathrm{diff}}(\bbR^m,M) \ar[r]^-{\psi_2} \ar@{->>}[d] &
  G_d(F)
 \\
  J^r_{0,\mathrm{diff}}(\bbR^m,M)/\mathrm{St}(\ker\alpha_0)
  \ar@{-->}[ru]_{\iota_2}
 }\]
in order to show that the dashed arrow $\iota_2$ is an immersion we need to
identify $\ker(\psi_2)_*$. Proposition \ref{int_smooth_maps_into_Md} gives
an answer in terms of the kernel of the differential (say at
$j^r_xg$) of the map $\varphi(f,-)$ which can be decomposed as
 \[J^r_{0,\mathrm{diff}}(\bbR^m,M)\xlra{f_*}
 J^r_0(\bbR^m,\bbR)\xlra{\alpha_0} A\]
To give a tangent vector in
$T_{j^r_0g}J^r_{0,\mathrm{diff}}(\bbR^m,M)$ is the same as to give
an element of $T_{\id}J^r_{0,\mathrm{diff}}(\bbR^m,\bbR^m)$ and then
compose with $g$. The elements of
$T_{\id}J^r_{0,\mathrm{diff}}(\bbR^m,\bbR^m)$ arise from vector
fields. Therefore let $X:\bbR^m\ra T\bbR^m$ be a local vector field
with a local flow
 \[\gamma:\bbR^m\times\bbR\lra \bbR^m\]
Under our identifications it defines a tangent vector
 \[\hat{X}:=\left.\frac{d}{dt}\right|_{t=0}j^r_0(g\gamma(-,t))
 \in T_{j^r_0g}J^r_{0,\mathrm{diff}}(\bbR^m,M)\]
Then for each $f\in\mcR$ we get
 \[d(\alpha_0f_*)(\hat{X})=
 \left.\frac{d}{dt}\right|_{t=0}\alpha_0(j^r_0(fg\gamma(-,t)))=
 \alpha_0(j^r_0(X(fg)))\]
Suppose that $X(0)\neq 0$. Then we claim that there exists an
$f\in\psi_2(j^r_xg)$ for which this expression is nonzero as well. In
other words such $\hat{X}$ can never lie in $\ker(\psi_2)_*$.\footnote{Also, and this is obvious, it can never lie in $\ker(\psi_1)_*$.} We postpone the proof of this claim and thus assume that the only
$\hat{X}$ which could produce an element in this kernel are the
vectors tangent to the submanifold
$J^r_{0,\mathrm{diff}}(\bbR^m,M)_x$. The Lie group $G^r_m$ acts
simply transitively on this space. Let $Y$ be an element of the Lie
algebra of $G^r_m$. Then we obtain a vector field (with $p$ running
over $J^r_{0,\mathrm{diff}}(\bbR^m,M)_x$)
 \[Y^+(p):=\left.\frac{d}{dt}\right|_{t=0}p\cdot\exp(tY)\]
and we also have similar vector fields on $J^r_0(\bbR^m,\bbR)$. The
restriction of $\varphi(f,-)$ is simply the composition
 \[J^r_{0,\mathrm{diff}}(\bbR^m,M)_x\xlra{f_*}J^r_0(\bbR^m,\bbR)
 \xlra{projection}J^r_0(\bbR^m,\bbR)/\ker\alpha_0\cong A\]
Therefore $Y^+(j^r_0g)$ lies in $\ker(\psi_2)_*$ if and only if for each
$f\in\psi_2(j^r_0g)$ we have $df_*(Y^+(j^r_0g))\in\ker\alpha_0$. As
the map $f_*$ is $G^r_m$-equivariant we can rewrite
 \[df_*(Y^+(j^r_0g))=Y^+(f_*j^r_0g)=Y^+(j^r_0(fg))\]
for the corresponding canonical vector field on
$J^r_0(\bbR^m,\bbR)$. Now observe that we get all possible values
$j^r_0(fg)\in\ker\alpha_0$ by varying $f$ over $\psi_2(j^r_0g)$.
Therefore $Y^+(j^r_0g)\in\ker(\psi_2)_*$ if and only if
$Y^+(\ker\alpha_0)\subseteq\ker\alpha_0$. These $Y$ clearly
constitute the Lie algebra of the stabilizer
$\mathrm{St}(\ker\alpha_0)$ and therefore we conclude that
$\ker(\psi_2)_*$ is exactly the vertical tangent bundle of
 \[J^r_{0,\mathrm{diff}}(\bbR^m,M)\lra
 J^r_{0,\mathrm{diff}}(\bbR^m,M)/\mathrm{St}(\ker\alpha_0)\cong J^AM\]
Consequently $\psi_2$ induces on the quotient $J^AM\cong
J^r_{0,\mathrm{diff}}(\bbR^m,M)/\mathrm{St}(\ker\varphi_0)$ an
immersion $\iota_2:J^AM\lra G_d(F)$.

Now we prove the remaining claim. Because we assume that $X(0)\neq
0$ we can find a local diffeomorphism $h:(\bbR^m,0)\ra(\bbR^m,0)$
such that $h^*X=\partial_{x_1}$. Then
 \[X(fg)=\partial_{x_1}(fgh)\circ h^{-1}\]
We denote by $K$ the kernel of
 \[J^r_0(\bbR^m,\bbR)\xlra{(h^{-1})^*}J^r_0(\bbR^m,\bbR)\xlra{\varphi_0}A\]
and we are looking for $f\in\mcR$ such that $j^r_0(fgh)\in K$ but
$j^r_0(\partial_{x_1}(fgh))\not\in K$. Let
$K\subseteq(x_1^k,x_2,\ldots,x_m)$ but
$K\not\subseteq(x_1^{k+1},x_2,\ldots,x_m)$. Both $g$ and $h$ being
diffeomorphisms there exists $f\in\mcR$ such that $j^r_0(fgh)\in
K-(x_1^{k+1},x_2,\ldots,x_m)$. Then $fgh=x_1^k\cdot\lambda$ modulo
$(x_2,\ldots,x_m)$ with $\lambda(0)\neq 0$ and it is easy to see
that $\partial_{x_1}(fgh)=x_1^{k-1}\mu$ modulo $(x_2,\ldots,x_m)$
with $\mu(0)\neq 0$ so that $j^r_0(\partial_{x_1}(fgh))$ does not
lie in $(x_1^k,x_2,\ldots,x_m)$ and in particular it does not lie in
$K$.
\end{proof}

\begin{question}
Is the inclusion $\iota:J^AM\hookrightarrow\mcM_d$ an
embedding? Quite easily (reducing to a local question and using
polynomials) one can reduce this problem to the question of the
canonical map
 \[G^r_m/(G^r_m\cap\mathrm{St}(\ker\alpha_0))\hookrightarrow Gl(J^r_0(\bbR^m,\bbR))/\mathrm{St}(\ker\alpha_0)\]
being an embedding.
\end{question}

An easy generalization to the case of finitely many Weil algebras
$A_i$, $i=1,\ldots,n$, gives a bundle
 \[
  \check{T}_{A_1,\ldots,A_n}M=
  \left.\left(\check{T}_{A_1}M\times\cdots\times\check{T}_{A_n}M\right)\right|_{M^{(n)}}\lra
  M^{(n)}
 \]
togeter with a bijection
$\check{T}_{A_1,\ldots,A_n}M\cong\SurHom_{\mathrm{alg}}(\mcR,A_1\times\cdots\times
A_n)$. Every ideal $I\in\mcM_d$ of type $A_1\times\cdots\times
A_n$ can be clearly recovered as a kernel of such surjective
homomorphism. In this way we obtain a space $J^{A_1,\ldots,A_n}M$ of
ideals of a fixed type $A_1\times\cdots\times A_n$ as a quotient of
$\check{T}_{A_1,\ldots,A_n}M$. Moreover we can again identify
$J^{A_1,\ldots,A_n}M$ with a quotient\footnote{Here
$(J^r_{0,\mathrm{diff}}(\bbR^m,M))^{(n)}_{\ushort{M}}$ denotes the
restriction of the power $(J^r_{0,\mathrm{diff}}(\bbR^m,M))^n\ra
M^n$ of the jet bundle to the subspace $M^{(n)}\subseteq M^n$. In
particular as the original bundle was a principal $G^r_m$-bundle the
resulting bundle over $M^{(n)}$ will be a principal
$(G^r_m)^n$-bundle on which there is an action of the symmetric
group $\Sigma_n$. Composing with the quotient map $M^{(d)}\ra M^{[d]}=M^{(d)}/\Sigma_d$ by this action one gets a
principal $(G^r_m\wr\Sigma_n)$-bundle
$(J^r_{0,\mathrm{diff}}(\bbR^m,M))^{(n)}_{\ushort{M}}\ra M^{[n]}$.}
 \[(J^r_{0,\mathrm{diff}}(\bbR^m,M))^{(n)}_{\ushort{M}}
 \times_{G^r_m\wr\Sigma_n}(G^r_m\wr\Sigma_n)/\mathrm{St}(\ker\alpha_0)\]
for some (any) surjective homomorphism
$\alpha_0:(J^r_0(\bbR^m,\bbR))^n\ra A_1\times\cdots\times A_n$. The
canonical inclusion map
$\iota:J^{A_1,\ldots,A_n}M\hookrightarrow\mcM_d$ with $d=\dim
A_1+\cdots+\dim A_n$ is an injective immersion. For a proof observe
that locally $(J^r_{0,\mathrm{diff}}(\bbR^m,M))^{(n)}_{\ushort{M}}$
is just a product of $J^r_{0,\mathrm{diff}}(\bbR^m,M)$ and so one
can almost copy the proof of Proposition~\ref{int_Weil_bundle_inclusion} (also see the proof of Proposition~\ref{int_configuration_space_inclusion}).

\appendix

\section{Interpolation}

In this section we construct a continuous interpolation and thus prove Theorem~\ref{int_continuous_interpolation}. The main ingredient is integration over simplices in $\bbR^m$.

Let $f:\bbR^m\rightarrow\bbR$ be a smooth function. If
$(x_0,\ldots,x_r)\in (\bbR^m)^{r+1}$ we denote by
$[x_0,\ldots,x_r]:\Delta^r\rightarrow\bbR^m$ the unique affine map
sending the vertices of the standard $r$-simplex $\Delta^r$ to
$x_0,\ldots,x_r$. It is obvious that this gives a bijective
correspondence between $(\bbR^m)^{r+1}$ and affine maps
$\Delta^r\rightarrow\bbR^m$. We will denote a general affine map
$\Delta^r\rightarrow\bbR^m$ by $\sigma$, if we do not want to
emphasize the values at vertices. By embedding it linearly into
$\bbR^r$ we give $\Delta^r$ the Lebesgue measure in which the volume
is $1$. Then we are able to define unambiguously $\mcI(f,\sigma)\in\Hom(S^r\bbR^m,\bbR)$ a symmetric $r$-form on $\bbR^m$ to be
\[\mcI(f,\sigma)=\int_{\Delta^r}f^{(r)}\sigma\]
where $f^{(r)}:\bbR^m\ra\Hom(S^r\bbR^m,\bbR)$ denotes the $r$-fold
derivative of $f$. In an obvious way  by omitting $x_i$ we get
$\partial_i\sigma:\Delta^{r-1}\rightarrow\bbR^m$ and thus also
\[\mcI(f,\partial_i\sigma)\in\Hom(S^{r-1}\bbR^m,\bbR)\]

\begin{lemma}\label{int_integration_formula}
For any smooth function $g:\bbR^m\rightarrow\bbR$ and for any
$\sigma=[x_0,\ldots,x_r]$, the following holds for $0\leq i,j\leq
r$
\[\int_{\Delta^r}g'_{x_j-x_i}\sigma=
r\left(\int_{\Delta^{r-1}}g(\partial_i\sigma)-
\int_{\Delta^{r-1}}g(\partial_j\sigma)\right)\] where $g'_{x_j-x_i}$
denotes the derivative of $g$ in the direction $x_j-x_i$. In
particular by taking $g=f^{(r-1)}$ we have
\[\mcI(f,\sigma)(v_1,\ldots,v_{r-1},x_j-x_i)=
r(\mcI(f,\partial_i\sigma)-
\mcI(f,\partial_j\sigma))(v_1,\ldots,v_{r-1})\]
\end{lemma}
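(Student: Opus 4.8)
The plan is to first reduce to the case $i=0$ and then compute that case by hand in barycentric coordinates. The reduction uses only that the directional derivative is linear in the direction: writing $x_j-x_i=(x_j-x_0)-(x_i-x_0)$ we get $g'_{x_j-x_i}\sigma=g'_{x_j-x_0}\sigma-g'_{x_i-x_0}\sigma$, so if the special case
\[\int_{\Delta^r}g'_{x_k-x_0}\sigma=r\Bigl(\int_{\Delta^{r-1}}g(\partial_0\sigma)-\int_{\Delta^{r-1}}g(\partial_k\sigma)\Bigr)\]
is known for every $k$ (it is trivial for $k=0$), then subtracting the instances $k=j$ and $k=i$ makes the $\partial_0\sigma$-terms cancel and leaves exactly the asserted identity.

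To prove the special case I would realize $\Delta^r$ as the standard simplex $\{(s_1,\dots,s_r):s_l\geq0,\ \sum_l s_l\leq1\}\subseteq\bbR^r$ with $x_0$ at the origin, so that $\sigma(s)=x_0+\sum_{l\geq1}s_l(x_l-x_0)$ and the unit-volume measure on $\Delta^r$ is $r!$ times Lebesgue measure. For $k\geq1$ one has $g'_{x_k-x_0}\circ\sigma=\partial_{s_k}(g\circ\sigma)$, so by Fubini and the fundamental theorem of calculus, integrating $s_k$ from $0$ to $1-\sum_{l\neq k}s_l$, the integral becomes $r!$ times the difference of two integrals over the standard $(r-1)$-simplex in the variables $(s_l)_{l\neq k}$: the term at $s_k=0$ is $g$ pulled back along the face $\partial_k\sigma$ (omit $x_k$), and the term at the upper limit is $g$ pulled back along the face where the barycentric coordinate $t_0=1-\sum_{l\geq1}s_l$ vanishes, i.e.\ along $\partial_0\sigma$ (with its vertices possibly listed in a permuted order, which is immaterial since $\int_{\Delta^{r-1}}$ is the integral over a geometric simplex with its normalized Lebesgue measure, invariant under affine self-bijections of the simplex). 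Comparing the normalizing constants $r!$ on $\Delta^r$ and $(r-1)!$ on $\Delta^{r-1}$ produces precisely the factor $r$, which is the whole point of keeping the measures normalized to total mass $1$.

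For the final "in particular" clause I would simply apply the formula with $g=f^{(r-1)}\colon\bbR^m\to\Hom(S^{r-1}\bbR^m,\bbR)$ (the statement holds for vector-valued $g$ by applying it in each coordinate), using symmetry of higher derivatives to identify $(f^{(r-1)})'_{x_j-x_i}=f^{(r)}(\,\cdot\,)(-,\dots,-,x_j-x_i)$, and then evaluate both sides on $(v_1,\dots,v_{r-1})$; unwinding the definitions of $\mcI(f,\sigma)$ and $\mcI(f,\partial_i\sigma)$ gives exactly the claimed relation. The only real difficulty I anticipate is bookkeeping rather than ideas: correctly matching the two boundary terms to the faces $\partial_k\sigma$ and $\partial_0\sigma$ (in particular that the \emph{upper} limit of $s_k$ gives $\partial_0\sigma$, not $\partial_k\sigma$), and carefully tracking the volume normalization so that the coefficient comes out to be $r$ and not $1$ or $1/r$. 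An alternative would be to integrate $g\circ\sigma$ against the constant (hence divergence-free) vector field $e_j-e_i$ on the affine hyperplane of $\Delta^r$ and invoke the divergence theorem, but the explicit Fubini computation seems cleaner for pinning down the constant.
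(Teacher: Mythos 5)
Your argument is correct, and it takes a mildly different route from the paper's. The paper proves the identity for general $i,j$ in one stroke: it parametrizes $\Delta^r$ by a homotopy $h:\Delta^{r-1}\times I\to\Delta^r$ sweeping the $j$-th face onto the $i$-th face (coning off the common codimension-two face), computes the Jacobian $|\det h'|=rt_0$, and then applies the fundamental theorem of calculus in the homotopy parameter $t$, the factor $r$ coming out of the Jacobian. You instead first reduce to $i=0$ by linearity of the directional derivative in the direction vector (the $\partial_0\sigma$-terms cancel on subtraction), and then do the $i=0$ case by Fubini and the fundamental theorem of calculus in the barycentric coordinate $s_k$, with the factor $r$ arising as the ratio $r!/(r-1)!$ of the normalizing constants of the unit-mass measures. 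The trade-off is that you avoid the change of variables along a degenerate map and its Jacobian computation, at the price of the preliminary reduction and of having to note that the upper-limit boundary term is $\partial_0\sigma$ with permuted vertices, which is harmless since the normalized measure on a simplex is invariant under affine automorphisms permuting the vertices; you correctly flag both of these bookkeeping points, and your treatment of the vector-valued case $g=f^{(r-1)}$ for the final clause matches the paper's.
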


\begin{proof}
Define a map $\delta:\Delta^{r-2}\rightarrow\Delta^r$ to be
\[[e_0,\ldots,\hat{e}_i,\ldots,\hat{e}_j,\ldots,e_r]\]
where $e_n$ are the vertices of $\Delta^r$. We think of
$\Delta^{r-1}$ as a convex span of $\Delta^{r-2}$ (with vertices
$e_1,\ldots,e_{r-1}$) and an additional point $e_0$. Then we can
define a homotopy
\[h:\Delta^{r-1}\times I\rightarrow\Delta^r\]
by a formula (with $x$ running over $\Delta^{r-2}$)
\[h(t_0e_0+(1-t_0)x,t)=t_0((1-t)e_i+te_j)+(1-t_0)\delta(x)\]
One sees easily that $h(-,0)$ is the inclusion of the $j$-th face of
$\Delta^r$ and $h(-,1)$ the inclusion of the $i$-th one. To compute
the determinant of $h'$ we choose a basis
$(e_1-e_0,\ldots,e_{r-1}-e_0,e)$ of $T(\Delta^{r-1}\times I)$ where
$e$ is the unit tangent vector of $I$. Then
\[h'(t_0e_0+(1-t_0)x,t)(e_n-e_0)=\delta(e_n)-e_i-t(e_j-e_i)\]
and
\[h'(t_0e_0+(1-t_0)x,t)(e)=t_0(e_j-e_i)\]
Hence we easily get a formula
\[|\det h'(t_0e_0+(1-t_0)x,t)|=ct_0\]
for some constant $c$ and it is not difficult to see that $c=r$.
Then
\[\int_{\Delta^r}g'_{x_j-x_i}\sigma=
\int_{\Delta^{r-1}\times I}rt_0g'_{x_j-x_i}\sigma h=
r\int_{\Delta^{r-1}}\int_0^1t_0g'_{x_j-x_i}\sigma h(-,t)\
\textrm{d}t\] Now note that
\[\tfrac{\partial}{\partial t}(g\sigma h)=t_0g'_{x_j-x_i}\sigma h\]
and so
\begin{align*}
\int_{\Delta^{r-1}}\int_0^1t_0g'_{x_j-x_i}\sigma h(-,t)\textrm{d}t & =\int_{\Delta^{r-1}}g\sigma h(-,1)-\int_{\Delta^{r-1}}g\sigma h(-,0) \\
& =\int_{\Delta^{r-1}}g(\partial_i\sigma)-\int_{\Delta^{r-1}}g(\partial_j\sigma)
\end{align*}
\end{proof}

\begin{corollary} \label{int_taylor_expansion}
The following formula holds
 \begin{equation} \label{int_taylor_expansion_expression}
  \begin{split}
   f(x_r)=\mcI(f,[x_0])
   +\cdots & +\tfrac{1}{i!}\cdot\mcI(f,[x_0,\ldots,x_i])(x_r-x_0,\ldots,x_r-x_{i-1})+\cdots
  \\
   & +\tfrac{1}{r!}\cdot\mcI(f,[x_0,\ldots,x_r])(x_r-x_0,\ldots,x_r-x_{r-1})
  \end{split}
 \end{equation}
\end{corollary}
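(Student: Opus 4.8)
The plan is to prove the stated identity by induction on $r$, the inductive step being a one-line application of Lemma~\ref{int_integration_formula} in the form obtained by taking $g=f^{(r-1)}$. Write $T_i:=\tfrac{1}{i!}\,\mcI(f,[x_0,\ldots,x_i])(x_r-x_0,\ldots,x_r-x_{i-1})$ for the $i$-th summand on the right-hand side, so that the claim is $f(x_r)=T_0+\cdots+T_r$. The base case $r=0$ is the tautology $f(x_0)=\mcI(f,[x_0])$, since $\Delta^0$ is a point and $\sigma=[x_0]$ carries it to $x_0$.

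For the inductive step, assume the formula for all $r$-tuples and fix $x_0,\ldots,x_r$. The idea is to feed the inductive hypothesis not the obvious $r$-tuple $(x_0,\ldots,x_{r-1})$ but rather $(x_0,\ldots,x_{r-2},x_r)$ — i.e.\ delete the node $x_{r-1}$ while keeping $x_r$ as the evaluation point. Reading off the formula for this tuple, its $i$-th term equals $T_i$ for $0\le i\le r-2$, whereas its top term is the ``spurious'' quantity $T':=\tfrac{1}{(r-1)!}\,\mcI(f,[x_0,\ldots,x_{r-2},x_r])(x_r-x_0,\ldots,x_r-x_{r-2})$. So the inductive hypothesis reads $f(x_r)=T_0+\cdots+T_{r-2}+T'$, and everything reduces to the identity $T'=T_{r-1}+T_r$.

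This identity is exactly Lemma~\ref{int_integration_formula} for $\sigma=[x_0,\ldots,x_r]$, $g=f^{(r-1)}$, indices $i=r-1$, $j=r$, and arguments $v_\ell=x_r-x_{\ell-1}$ ($\ell=1,\ldots,r-1$): here $\partial_{r-1}\sigma=[x_0,\ldots,x_{r-2},x_r]$ and $\partial_r\sigma=[x_0,\ldots,x_{r-1}]$, so the lemma gives $\mcI(f,[x_0,\ldots,x_r])(x_r-x_0,\ldots,x_r-x_{r-1})=r\bigl((r-1)!\,T'-(r-1)!\,T_{r-1}\bigr)$, using the symmetry of $\mcI(f,\sigma)$ to place $x_r-x_{r-1}$ in the last slot; dividing by $r!$ yields $T_r=T'-T_{r-1}$.

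There is no real obstacle here once Lemma~\ref{int_integration_formula} is in hand — the only thing to watch is the bookkeeping, namely choosing the sub-tuple $(x_0,\ldots,x_{r-2},x_r)$ so that the inductive hypothesis regenerates all but one term of the formula, and correctly identifying the face maps $\partial_{r-1}\sigma$ and $\partial_r\sigma$ with $[x_0,\ldots,x_{r-2},x_r]$ and $[x_0,\ldots,x_{r-1}]$.
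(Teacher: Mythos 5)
Your proof is correct and is essentially the paper's own argument: induction on $r$, with the inductive step obtained by applying Lemma~\ref{int_integration_formula} (with $g=f^{(r-1)}$, $i=r-1$, $j=r$) to rewrite the top term via the faces $[x_0,\ldots,\widehat{x_{r-1}},x_r]$ and $[x_0,\ldots,x_{r-1}]$, and then invoking the inductive hypothesis on the tuple $(x_0,\ldots,x_{r-2},x_r)$. The only difference is presentational bookkeeping (you start from the inductive hypothesis and prove $T'=T_{r-1}+T_r$, while the paper starts from the top term and rewrites it as $T'-T_{r-1}$), which is the same identity.
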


\begin{proof}
We use induction with respect to $r$. According to the previous
lemma
 \begin{equation} \label{int_highest_term_in_taylor_expansion}
  \tfrac{1}{r!}\cdot\mcI(f,[x_0,\ldots,x_r])(x_r-x_0,\ldots,x_r-x_{r-1})
 \end{equation}
is equal to
 \[\tfrac{1}{(r-1)!}\cdot(\mcI(f,[x_0,\ldots,\widehat{x_{r-1}},x_r])-\mcI(f,[x_0,\ldots,x_{r-1}]))(x_r-x_0,\ldots,x_r-x_{r-2})\]
Adding the remaining terms of
(\ref{int_taylor_expansion_expression}) to
(\ref{int_highest_term_in_taylor_expansion}) and using the inductive
hypothesis on $[x_0,\ldots,\widehat{x_{r-1}},x_r]$ we prove the
inductive step.
\end{proof}

\begin{corollary} \label{int_equivalence}
The following conditions are equivalent
\begin{itemize}
\item[$(i)$]{$\mcI(f,\tau)=0$ for all the faces $\tau$ of $\sigma$.}
\item[$(ii)$]{$\mcI(f,\partial_1\cdots\partial_r\sigma)=\cdots=
\mcI(f,\partial_i\cdots\partial_r\sigma)=\cdots=\mcI(f,\partial_r\sigma)=\mcI(f,\sigma)=0$.}
\end{itemize}
\end{corollary}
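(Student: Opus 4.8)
The statement splits into two implications, the first of which is immediate. For $(i)\Rightarrow(ii)$ one only has to observe that every simplex occurring in $(ii)$ is a face of $\sigma$: indeed $\partial_i\partial_{i+1}\cdots\partial_r\sigma=[x_0,\ldots,x_{i-1}]$ is obtained from $\sigma$ by a sequence of face maps, and $\sigma$ is a face of itself. Hence the vanishing asserted in $(i)$ specialises to the vanishing asserted in $(ii)$.

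The content is in $(ii)\Rightarrow(i)$, and the plan is to prove it by \emph{downward} induction on the dimension $k$, showing that $\mcI(f,\rho)=0$ for every $k$-dimensional face $\rho$ of $\sigma$, for $k=r,r-1,\ldots,0$. The base case $k=r$ is exactly the last equation $\mcI(f,\sigma)=0$ of $(ii)$.

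For the inductive step assume $k<r$ and that $\mcI(f,\cdot)$ vanishes on all $(k+1)$-dimensional faces of $\sigma$. The main tool is the identity of Lemma~\ref{int_integration_formula}: if $\tau$ is a $(k+1)$-simplex with $\mcI(f,\tau)=0$, then the left-hand side of the displayed formula is identically $0$, whence $\mcI(f,\partial_i\tau)=\mcI(f,\partial_j\tau)$ for any two vertices $x_i,x_j$ of $\tau$; in other words all facets of $\tau$ carry the same value of $\mcI(f,\cdot)$. Now let $\rho,\rho'$ be two $k$-dimensional faces of $\sigma$ having $k$ common vertices; then $\tau:=\rho\cup\rho'$ is a $(k+1)$-dimensional face of $\sigma$ and $\rho,\rho'$ are precisely its two facets omitting the vertex lying only in $\rho'$, respectively only in $\rho$. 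Since $\mcI(f,\tau)=0$ by the inductive hypothesis, the previous remark gives $\mcI(f,\rho)=\mcI(f,\rho')$. As any two $(k+1)$-element subsets of $\{x_0,\ldots,x_r\}$ are joined by a chain of subsets in which consecutive members differ in exactly one element, $\mcI(f,\cdot)$ is constant on the set of all $k$-dimensional faces of $\sigma$. Finally this common value is $0$, because $[x_0,\ldots,x_k]=\partial_{k+1}\cdots\partial_r\sigma$ is such a face and $(ii)$ says $\mcI(f,[x_0,\ldots,x_k])=0$, which closes the induction.

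I do not expect a genuine obstacle; the statement is a purely combinatorial consequence of Lemma~\ref{int_integration_formula}. The two points needing care are, first, correctly matching up in the application of that lemma the two distinguished vertices $x_i,x_j$ of $\tau$ with the two facets $\rho=\partial_j\tau$ and $\rho'=\partial_i\tau$, and second, the elementary observation that one can pass between any two equinumerous vertex subsets by single-element swaps — it is this connectivity that upgrades ``equal on facets of a common simplex'' to ``constant on a whole dimension'', which is the crux of the inductive step.
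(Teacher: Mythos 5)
Your proof is correct, and it rests on the same key ingredient as the paper's: Lemma~\ref{int_integration_formula} applied to a simplex $\tau$ with $\mcI(f,\tau)=0$, which forces $\mcI(f,\partial_i\tau)=\mcI(f,\partial_j\tau)$ for all $i,j$. What differs is the organization of the induction. The paper inducts on the dimension of $\sigma$ itself: from $(ii)$ it gets $\mcI(f,\partial_i\sigma)=0$ for every $i$ via the lemma, then argues that $(ii)$ holds (after a renumbering of vertices) for each facet $\partial_i\sigma$ and recursively applies the corollary to the facets. You instead fix $\sigma$ and run a single downward induction on the dimension $k$ of its faces: vanishing on all $(k+1)$-faces makes $\mcI(f,-)$ constant on the $k$-faces, since any two vertex subsets of size $k+1$ are joined by single-element swaps and each swap is realized inside a common $(k+1)$-face, and the constant is then pinned to $0$ by the chain member $[x_0,\ldots,x_k]=\partial_{k+1}\cdots\partial_r\sigma$ from $(ii)$. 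Your route buys cleaner bookkeeping: it avoids the renumbering step and makes explicit the connectivity fact that the paper's remark about ``a common face of $\partial_i\sigma$ and $\partial_r\sigma$'' only gestures at; the paper's route is marginally shorter because the chain $(ii)$ for $\partial_r\sigma$ is literally a sub-list of the chain for $\sigma$. One point worth stating in either version is that $\mcI(f,\tau)$ depends only on the multiset of vertices (reordering precomposes $\tau$ with a measure-preserving affine automorphism of the standard simplex), which is what licenses treating faces as unordered vertex subsets.
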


\begin{proof}
We assume $(ii)$. By induction we can also assume that for all the
faces $\tau$ of $\partial_r\sigma$, we have $\mcI(f,\tau)=0$. By
the previous lemma $\mcI(f,\partial_i\sigma)=0$ for all $i$. As
there is a common face of $\partial_i\sigma$ and
$\partial_r\sigma$ we see that up to a renumbering of vertices the
condition $(ii)$ is satisfied for $\partial_i\sigma$ and by
induction again we get $(i)$ for all the faces of
$\partial_i\sigma$.
\end{proof}

\begin{corollary} \label{int_vanishing_of_integrals_implies_vanishing_of_jets}
Let $\sigma=[x_0,\ldots,x_r]$. If
\[\mcI(f,\partial_1\cdots\partial_r\sigma)=\cdots=
\mcI(f,\partial_i\cdots\partial_r\sigma)=\cdots=\mcI(f,\partial_r\sigma)=\mcI(f,\sigma)=0\]
and if $y$ appears $k$-times in $x_0,\ldots,x_r$ then
$j^{k-1}_yf=0$.
\qed
\end{corollary}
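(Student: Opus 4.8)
The plan is to reduce everything to the case of a \emph{constant} affine simplex via Corollary~\ref{int_equivalence}, and then to compute the relevant integral by hand.

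First I would observe that the hypothesis of the corollary is verbatim condition $(ii)$ of Corollary~\ref{int_equivalence}, so that corollary gives at once condition $(i)$: we have $\mcI(f,\tau)=0$ for \emph{every} face $\tau$ of $\sigma$. Here the faces of $\sigma=[x_0,\ldots,x_r]$ are precisely the simplices $[x_{j_0},\ldots,x_{j_s}]$ for increasing index sequences $0\le j_0<\cdots<j_s\le r$; these are the iterated face maps $\partial_{i_1}\cdots\partial_{i_{r-s}}\sigma$ obtained by omitting the complementary vertices. Now suppose $y$ occurs $k$ times among $x_0,\ldots,x_r$, say at positions $a_1<\cdots<a_k$. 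Then for each $j$ with $0\le j\le k-1$ the index sequence $a_1<\cdots<a_{j+1}$ picks out the face
\[\tau_j=[x_{a_1},\ldots,x_{a_{j+1}}]=[\underbrace{y,\ldots,y}_{j+1}]:\Delta^j\lra\bbR^m,\]
which is the constant affine map with value $y$; by the above, $\mcI(f,\tau_j)=0$.

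It then remains to evaluate $\mcI(f,\tau_j)$ directly from its definition $\mcI(f,\tau_j)=\int_{\Delta^j}f^{(j)}\tau_j$. Since $\tau_j$ is constant, the integrand $f^{(j)}\circ\tau_j$ is the constant function on $\Delta^j$ with value $f^{(j)}(y)\in\Hom(S^j\bbR^m,\bbR)$, and as $\Delta^j$ carries the normalised Lebesgue measure with total volume $1$ we obtain $\mcI(f,\tau_j)=f^{(j)}(y)$. Combining with the previous step, $f^{(j)}(y)=0$ for every $j=0,1,\ldots,k-1$, which is exactly the assertion $j^{k-1}_yf=0$.

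There is no genuine obstacle in this argument: the only point worth flagging is that the hypothesis already is condition $(ii)$ of Corollary~\ref{int_equivalence}, so vanishing of $\mcI(f,-)$ on all faces — including the degenerate constant faces $[y,\ldots,y]$, on which the integral collapses to a derivative of $f$ at $y$ — comes for free.
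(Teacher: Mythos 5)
Your argument is correct and is exactly the intended one: the paper states this corollary with no written proof precisely because it follows immediately from Corollary~\ref{int_equivalence} (hypothesis $=$ condition $(ii)$, hence $\mcI(f,\tau)=0$ on all faces) together with the observation that on the constant face $[y,\ldots,y]:\Delta^j\to\bbR^m$ the integral collapses to $f^{(j)}(y)$ since $\Delta^j$ has normalised volume $1$. Nothing is missing.
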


\begin{remark}
The converse is not true in general (with the exception
$x_0=\cdots=x_r$) for dimensional reasons unless $m=1$: if
$\{y_1^{k_1},\ldots,y_n^{k_n}\}$ denotes the class of
$(x_0,\ldots,x_r)$ in $\SP_{r+1}(\bbR)=\bbR^{r+1}/\Sigma_{r+1}$
then the vanishing of $j^{k_i-1}_{y_i}f$ for all $i=1,\ldots,n$ implies
$\mcI(f,[x_0,\ldots,x_j])=0$ for all $j=0,\ldots,r$.
\end{remark}

Now we will explain how this leads to an interpolation map. First we
restrict ourselves to interpolation at points close to a single
point, later generalizing to a number of points. This is only to
lighten the notation a bit. Going back from the local situation to
the case of a smooth manifold $M$ we identify a neighborhood of $y$ with
$\bbR^m$. We also fix a complementary linear subspace $D$ to the
ideal $(\mfm_y)^k$ and define the following map
\begin{align*}
G:(\bbR^m)^k\times\mcR & \lra(\bbR^m)^k\times\Hom(S^0\bbR^m,\bbR)\times\cdots\times\Hom(S^{k-1}\bbR^m,\bbR) \\
& \qquad\qquad\cong(\bbR^m)^k\times J^{k-1}_*(\bbR^m,\bbR)
\end{align*}
(with $J^{k-1}_*(\bbR^m,\bbR)$ being any $J^{k-1}_x(\bbR^m,\bbR)$ -- they
are all identified via translations) by the formula
\[G((x_1,\ldots,x_k),f)=((x_1,\ldots,x_k),\mcI(f,[x_1]),\ldots,\mcI(f,[x_1,\ldots,y_x]))\]
We denote by $G_D$ its restriction
\[G_D:(\bbR^m)^k\times D\rightarrow(\bbR^m)^k\times J^{k-1}_*(\bbR^m,\bbR)\]
Note that for each $f\in\mcR$ the map $G(-,f)$ is continuous (in
fact smooth) and therefore so is $G_D$. Our transversality
condition on $D$ implies that on the fibres over $(y,\ldots,y)$
\[(G_D)_{(y,\ldots,y)}:D\rightarrow J^{k-1}_*(\bbR^m,\bbR)\]
is a linear isomorphism. Hence we find a neighborhood of
$(y,\ldots,y)$ in $M^k$ of the form $U^k$ with $U$ compact convex,
such that the restriction
\[G_D:U^k\times D\rightarrow U^k\times J^{k-1}_*(\bbR^m,\bbR)\]
is an isomorphism of vector bundles over $U^k$. Hence we can
define a map
\[\hat{A}:U^k\times\mcR\xlra{G}U^k\times J^{k-1}_*(\bbR^m,\bbR)\xlra{G_D^{-1}}U^k\times D\rightarrow D\]
Now note that according to Corollary \ref{int_equivalence} the
value of $\hat{A}$ does not depend on the ordering of the points
and hence we get
\[A:U^k/\Sigma_k\times\mcR\rightarrow D\]
with the property that $A(Y,f)$ is an interpolation of $f$ at $Y$,~i.e.~such that $f\equiv A(Y,f)$ modulo $\mfm_Y$.

\begin{proposition}
The interpolation map $A$ is continuous and $A(\{y^k\},f)=0$
whenever $f\in(\mfm_y)^k$.
\end{proposition}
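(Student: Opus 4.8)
The plan is to extract both assertions directly from the construction. By definition $A(\{y^k\},f)=\hat A((y,\ldots,y),f)$, and, because $G_D$ restricted to $U^k\times D$ is a vector bundle isomorphism over $U^k$, the value $\hat A(x,f)$ is the unique $g\in D$ with $G(x,g)=G(x,f)$. For the vanishing I would first evaluate $G$ on the diagonal: for $h\in\mcR$ and $i\le k$ the simplex $[y,\ldots,y]$ with $i$ equal vertices is the constant affine map $\Delta^{i-1}\to\{y\}$, so, since $\vol\Delta^{i-1}=1$,
\[\mcI(h,[y,\ldots,y])=\int_{\Delta^{i-1}}h^{(i-1)}\circ[y,\ldots,y]=h^{(i-1)}(y).\]
Hence $G((y,\ldots,y),h)=((y,\ldots,y),h(y),h'(y),\ldots,h^{(k-1)}(y))$, which under the identification of $J^{k-1}_*(\bbR^m,\bbR)$ with $\Hom(S^0\bbR^m,\bbR)\times\cdots\times\Hom(S^{k-1}\bbR^m,\bbR)$ is exactly $((y,\ldots,y),j^{k-1}_y h)$. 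Now for $f\in(\mfm_y)^k=\{h\in\mcR\mid j^{k-1}_y h=0\}$ we get $G((y,\ldots,y),f)=((y,\ldots,y),0)=G((y,\ldots,y),0)$ with $0\in D$; since $(G_D)_{(y,\ldots,y)}$ is injective, $g=0$ is the unique element of $D$ with $G((y,\ldots,y),g)=G((y,\ldots,y),f)$, so $\hat A((y,\ldots,y),f)=0$ and therefore $A(\{y^k\},f)=0$.

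For the continuity, note that $A$ is already well defined (by Corollary~\ref{int_equivalence}) and $\hat A=A\circ(q\times\id_\mcR)$, where $q:U^k\to U^k/\Sigma_k$ is the orbit map of the finite group $\Sigma_k$; such $q$ is open, so $q\times\id_\mcR$ is an open continuous surjection and hence a quotient map, and it suffices to prove $\hat A$ continuous. Now $\hat A$ is the composite
\[U^k\times\mcR\xra{G}U^k\times J^{k-1}_*(\bbR^m,\bbR)\xra{G_D^{-1}}U^k\times D\to D.\]
Writing $G_D$ over $U^k$ as $(x,v)\mapsto(x,\Phi(x)v)$ with $\Phi:U^k\to\Hom(D,J^{k-1}_*(\bbR^m,\bbR))$ smooth — its coordinates in a basis $f_1,\ldots,f_N$ of $D$ are the smooth maps $x\mapsto G(x,f_j)$ — and pointwise invertible, Cramer's rule makes $\Phi^{-1}$ smooth, so $G_D^{-1}$ is continuous. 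Everything therefore reduces to the \emph{joint} continuity of $G$, i.e. of each component $(f,x)\mapsto\mcI(f,[x_1,\ldots,x_i])=\int_{\Delta^{i-1}}f^{(i-1)}\circ[x_1,\ldots,x_i]$. I would factor this through $C(\Delta^{i-1},\Hom(S^{i-1}\bbR^m,\bbR))$ with the sup norm: the assignment $(f,x)\mapsto f^{(i-1)}\circ[x_1,\ldots,x_i]$ is continuous because $f\mapsto f^{(i-1)}|_U$ is continuous into $C(U,\Hom(S^{i-1}\bbR^m,\bbR))$ by the definition of the $C^\infty$ topology, $x\mapsto[x_1,\ldots,x_i]$ is continuous into $C(\Delta^{i-1},U)$ (the image of $[x_1,\ldots,x_i]$ is the convex hull of $\{x_1,\ldots,x_i\}\subseteq U$, a fixed compact set), and the composition map $C(U,\cdot)\times C(\Delta^{i-1},U)\to C(\Delta^{i-1},\cdot)$ is continuous; then integration $C(\Delta^{i-1},\Hom(S^{i-1}\bbR^m,\bbR))\to\Hom(S^{i-1}\bbR^m,\bbR)$ is bounded linear. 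Composing, $G$ and hence $\hat A$ are continuous.

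The statement is essentially bookkeeping, and the only point needing real care is in the continuity: a limit must be interchanged with the integral over $\Delta^{i-1}$, which is exactly the uniform convergence packaged above (using that the $C^\infty$ topology controls the derivatives uniformly on the compact set $U$), and then one observes that continuity descends to the $\Sigma_k$-quotient because the orbit map of a finite group is open. The vanishing at $\{y^k\}$ drops out immediately once $G$ is identified on the diagonal with the jet map $h\mapsto j^{k-1}_y h$.
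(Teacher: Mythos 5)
Your proof is correct and takes essentially the same approach as the paper: the continuity is reduced to joint continuity of $G$, which both you and the paper obtain from uniform control of $f^{(i)}$ on the compact convex $U$ together with continuity in the vertices (your function-space packaging versus the paper's explicit $\varepsilon/2$ estimate). The points you make explicit---the vanishing $A(\{y^k\},f)=0$ via evaluating $G$ on the diagonal, the continuity of $G_D^{-1}$, and the descent through the open $\Sigma_k$-quotient map---are all correct and simply fill in details the paper's proof leaves implicit.
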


\begin{proof}
By the construction of $A$ it is enough to show the continuity of
each component
\[G_i:U^k\times\mcR\rightarrow\Hom(S^i\bbR^m,\bbR)\]
of $G$, $i=0,\ldots,k-1$. Hence let us fix
$((x_1,\ldots,x_k),f)\in U^k\times\mcR$ and denote
$X=(x_1,\ldots,x_k)$ for a short. We choose a norm on
$\Hom(S^i\bbR^m,\bbR)$ and a neighborhood
\[\{\alpha\in\Hom(S^i\bbR^m,\bbR)\ |\ ||\alpha-G_i(X,f)||<\varepsilon\}\]
Because of the continuity of $G_i(-,f)$ there is a neighborhood
$W$ of $X$ on which
\[||G_i(-,f)-G_i(X,f)||<\varepsilon/2\]
Hence if $g$ is such that $||g^{(i)}-f^{(i)}||<\varepsilon/2$ on
$U$ then also
\[||G_i(-,g)-G_i(-,f)||<\varepsilon/2\]
on $U$ and finally on $W$ we have
\[||G_i(-,g)-G_i(X,f)||\leq||G_i(-,g)-G_i(-,f)||+||G_i(-,f)-G_i(X,f)||<\varepsilon\]
Because the condition on $g^{(i)}$ defines a neighborhood of $f$ in
$\mcR$, this finishes the proof.
\end{proof}

\begin{example}[Kergin interpolation] \label{example_Kergin_interpolation}
Let us take for $D$ the set of all polynomials of degree less than $k$. Then over each $(x_1,\ldots,x_k)$ the matrix of $G_D$ is upper triangular with all entries polynomial in $(x_1,\ldots,x_k)$ and diagonal entries constant. Therefore $G_D^{-1}$ is polynomial and, assuming that $f$ is a polynomial, so is $G(-,f)$ and thus $\hat A(-,f)$. Also one can take $U=\bbR^m$.
\end{example}

Now if we have an arbitrary
$Y=\{y_1^{k_1},\ldots,y_n^{k_n}\}\in\SP_d$ we identify a
neighbourhood of each $y_i$ with $\bbR^m$. Writing
\[\bbR^{md}\cong(\bbR^m)^{k_1}\times\cdots\times(\bbR^m)^{k_n}\]
we replace $G$ by the corresponding map
\[G:\bbR^{md}\times\mcR\rightarrow\bbR^{md}\times J^{k_1-1}_*(\bbR^m,\bbR)\times\cdots\times J^{k_n-1}_*(\bbR^m,\bbR)\]
Again if $D$ is a complementary linear subspace to $\mfm_Y\mcR$ then on the fibres
\[(G_D)_{((y_1,\ldots,y_1),\ldots,(y_n,\ldots,y_n))}:D\rightarrow
J^{k_1-1}_*(\bbR^m,\bbR)\times\cdots\times J^{k_n-1}_*(\bbR^m,\bbR)\] is
an isomorphism and we get a neighborhood of the form
$U_1^{k_1}\times\cdots\times U_n^{k_n}$ over which $G_D$ is an
isomorphism. Denoting the induced neighborhood of $Y$ in $\SP_d$ by
$W$ one gets an interpolation map $A:W\times\mcR\rightarrow D$.

\begin{proposition}
The interpolation map $A$ is continuous and $A(Y,f)=0$ whenever $f\in \mfm_Y$. \qed
\end{proposition}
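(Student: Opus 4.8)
The plan is to imitate, with the obvious extra indices, the proof of the single‑point version proved just above. Recall that $A(Y,f)$ is built as the composite
\[W\times\mcR\xlra{G(-,f)}U_1^{k_1}\times\cdots\times U_n^{k_n}\times\bigl(J^{k_1-1}_*(\bbR^m,\bbR)\times\cdots\times J^{k_n-1}_*(\bbR^m,\bbR)\bigr)\xlra{G_D^{-1}}U_1^{k_1}\times\cdots\times U_n^{k_n}\times D\lra D.\]
Here $G_D$ is an isomorphism of vector bundles over $U_1^{k_1}\times\cdots\times U_n^{k_n}$, hence a homeomorphism, so $G_D^{-1}$ is continuous, and the final projection onto $D$ is continuous; therefore it suffices to prove that $G$ is (jointly) continuous. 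Since $G$ takes values in a product of finite‑dimensional jet spaces, this reduces to continuity of each scalar component, and every such component is of the form $(x_1,\dots,x_d,f)\mapsto\mcI(f,\tau)$ for an affine simplex $\tau$ whose vertices are among the $x_i$ (the distinguished front faces occurring in each block of the definition of $G$).

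First I would run the $\varepsilon/2$ argument exactly as before. Fix $(X,f)$ and an $\varepsilon$‑ball around $\mcI(f,\tau)$. Continuity of $\mcI(-,\tau)(\,\cdot\,,f)$ in the vertices — in fact smoothness, since the integrand $f^{(s)}\sigma$ depends smoothly on $\sigma$ — yields a neighbourhood $W$ of $X$ on which $|\mcI(f,\tau')-\mcI(f,\tau)|<\varepsilon/2$. Next, taking each $U_j$ compact (convex), the seminorm $g\mapsto\sup_{U_j}\|g^{(s)}-f^{(s)}\|$ defines a neighbourhood of $f$ in $\mcR=C^\infty(M,\bbR)$, and on it $|\mcI(g,\tau')-\mcI(f,\tau')|\le\varepsilon/2$ uniformly over the relevant $\tau'$ because the simplices have volume $1$. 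The triangle inequality then gives $|\mcI(g,\tau')-\mcI(f,\tau)|<\varepsilon$ on the intersection of $W$ with this neighbourhood of $f$, which is joint continuity of the component, hence of $G$, hence of $A$.

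For the vanishing, evaluate $G$ at the centre $X_Y=((y_1,\dots,y_1),\dots,(y_n,\dots,y_n))$: every face that enters is a constant simplex $[y_j,\dots,y_j]$, and $\mcI(f,[y_j,\dots,y_j])$ with $s+1$ copies equals $f^{(s)}(y_j)$ (the integrand is the constant $f^{(s)}(y_j)$ on a simplex of volume $1$). Thus the $j$‑th block of $G(X_Y,f)$ is exactly $j^{k_j-1}_{y_j}f$. If $f\in\mfm_Y=(\mfm_{y_1})^{k_1}\cap\cdots\cap(\mfm_{y_n})^{k_n}$ then all these jets vanish, so $G(X_Y,f)=(X_Y,0)$, and since $G_D^{-1}$ is fibrewise linear it sends this to $0\in D$, i.e.\ $A(Y,f)=0$. (That $A(Y,f)$ is a genuine interpolation of $f$ at $Y$ and is independent of the ordering of the points inside each block has already been arranged through Corollaries~\ref{int_equivalence} and~\ref{int_vanishing_of_integrals_implies_vanishing_of_jets}.) There is no real obstacle here; the only mildly delicate point, as in the single‑point case, is that compactness of the $U_j$ is precisely what turns the $\mcR$‑seminorms into honest neighbourhoods of $f$.
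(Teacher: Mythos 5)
Your proof is correct and is exactly the adaptation the paper intends: the paper leaves this multi-point proposition without proof (it is the evident multi-index repetition of the single-point case, whose printed proof is the same $\varepsilon/2$ argument on the components of $G$ using compact convex $U_j$ and volume-one simplices). Your explicit verification of the vanishing — at the centre all faces are constant simplices, so the blocks of $G$ are the jets $j^{k_j-1}_{y_j}f$, which vanish for $f\in\mfm_Y$, and the fibrewise linear $G_D^{-1}$ then gives $A(Y,f)=0$ — is the argument the paper leaves implicit in the construction, and it is fine.
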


\begin{proof}[Proof of Theorem~\ref{int_continuous_interpolation}]
Let $F\pitchfork\mcP_d$. Then for each $Y\in\SP_d$ we can choose a subspace $D\subseteq F$ complementary to $\mfm_Y$ to obtain a continuous interpolation map with values in $D\subseteq F$ defined in a neighbourhood of $Y$. Thus we have proved Theorem~\ref{int_continuous_interpolation} locally (in $\SP_d$).

To globalize we glue the local interpolation maps using a partition of unity on $\SP_d$. This is possible since the interpolation maps form an affine space. In this way we obtain a continuous interpolation map and with a bit of care we may achieve that for a fixed $Y$ we have $A(Y,f)=0$ for $f\in\mfm_Y$.
\end{proof}

\end{document}